\numberwithin{equation}{section}
\newcommand{\C}{\mathbb{C}}
\newcommand{\R}{\mathbb{R}}
\newtheorem{theorem}{Theorem}[section]
\newtheorem{lemma}[theorem]{Lemma}
\newtheorem{rem}[theorem]{Remark}
\newcommand{\ip}[2]
      {\ensuremath{\langle #1,#2 \rangle}}
\begin{document} \title{ AN ALTERNATE PROOF OF DE BRANGES THEOREM ON  CANONICAL SYSTEMS }\maketitle
\begin{center}\author{KESHAV RAJ ACHARYA \\  \vspace{.2in} Department of Mathematics \\Southern Polytechnic State University\\ Marietta GA\\ kacharya@spsu.edu } \end{center}
\begin{abstract}  The aim of this paper is to show that, in the limit circle case,  the defect index of a symmetric relation induced by  canonical systems (\ref{ca}), is constant on $\C$. This  provides an alternative proof of  De Branges theorem that  the canonical systems (\ref{ca}) with $\operatorname{tr}H \equiv 1$ imply the limit point case. To this end, we  discuss  the spectral theory of a linear relation induced by a canonical system (\ref{ca}).\end{abstract} 

\vspace{0.1in}

 \textbf{AMS Subject Classification:} 47A06, 47A10 \\

\textbf{Key words:} Canonical system, self- adjoint relation, defect 
index, spectrum and spectral kernal of a relation. \\

\section{ Introduction }
This paper deals with the canonical systems of the following form \begin{equation}\label{ca}Ju'(x)=z H(x)u(x), \,\,\, z \in \C.\end{equation} Here $ J =
\left(\begin{array}{cc}0&-1\\1&0\end{array}\right)$  and $H(x) $  is a $2\times 2 $ positive semi-definite matrix whose entries are locally integrable.  For fixed $z\in \C ,$ a function
$u(.,z):[0,N]\rightarrow \C^2$ is called a solution if $u$ is
absolutely continuous and satisfies (\ref{ca}). Consider the Hilbert space \[ L^2(H,\R_+) = \Big \{f(x,z)
= \left( \begin{array}{c}f_1(x)\\f_2(x)\end{array} \right): \|f\| < \infty \Big \}\] 
provided with an inner product $\big<f, g\big> = \int_{0}^{\infty}f(x)^*
H(x)g(x)dx$. \\

The canonical systems (\ref{ca}) on $L^2(H,\R_+)$ has been studied by Hassi, De Snoo, Winkler, and Remling in  \cite{HW2,cr, RC, HW3} in various context. The Jacobi and Schr\"odinger equations can be written into canonical systems with appropriate choice of $H(x)$.  In addition,  canonical systems  are closely connected with the theory of de Branges spaces and the inverse spectral theory of one dimensional Schr\"odinger operators,  see \cite{RC}. We believe that the extensions of the theories from these equations to the canonical systems is to be of general interest.\\

  If the system  (\ref{ca}) can be written in the form
 \[H(x)^{-1}Ju' = zu\]
  then we may consider this as an eigenvalue equation of an operator on $L^2(H,R_+)$. But $H(x)$ is not invertible in general.  Instead, the system
 (\ref{ca}) induces a linear relation that may have a multi-valued part. Therefore, we  consider this as an eigenvalue problem of a  linear relation induced by (\ref{ca}) on $L^2(H,R_+)$.
 
  For some $z\in \C,$  if the canonical system (\ref{ca}) has all solutions in $ L^2(H,\R_+)$  we say that the system is in the limit circle case, and if the system has unique  solution in $ L^2(H,\R_+)$ we say that  the system is in limit point case. The basic results  in this paper are the following theorems: 
  
\begin{theorem}\label{lc} In the limit circle case, the defect index $\beta ( \mathcal R_0)$ of the symmetric relation $\mathcal R_0,$ induced by (\ref{ca}) is constant on $\C.$ \end{theorem}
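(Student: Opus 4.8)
The plan is to identify the defect space of $\mathcal R_0$ at a point $z$ with a space of square-integrable solutions of (\ref{ca}) and then to show that the limit circle hypothesis makes the dimension of that space independent of $z$. Concretely, using the standard identification of $\mathcal R_0^*$ with the maximal relation of (\ref{ca}) — so that $([f],[g])\in\mathcal R_0^*$ exactly when $f$ has an absolutely continuous representative with $Jf'=Hg$ — the defect space $\mathcal N_z:=\ker(\mathcal R_0^*-z)$ consists precisely of the classes $[u]\in L^2(H,\R_+)$ for which $u$ solves (\ref{ca}), and $\beta(\mathcal R_0)=\dim_{\C}\mathcal N_z$. For every $z$ the solution space $\mathcal S_z$ of (\ref{ca}) has complex dimension $2$, and a solution $u\in\mathcal S_z$ represents the zero class of $L^2(H,\R_+)$ exactly when $H(x)u(x)=0$ a.e.; but then $Ju'=zHu=0$, so $u$ is constant, and conversely every constant $c$ with $Hc=0$ a.e.\ gives such a solution. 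Hence these trivial solutions form, independently of $z$, the fixed space $\mathcal C_0:=\{c\in\C^2:H(x)c=0\text{ for a.e.\ }x\in\R_+\}$, of dimension $d\in\{0,1,2\}$ (if $d=2$ then $H\equiv0$, $L^2(H,\R_+)=\{0\}$, and the theorem is trivial, so assume $d\le 1$). Passing to the quotient gives $\mathcal N_z\cong(\mathcal S_z\cap L^2(H,\R_+))/\mathcal C_0$, so that at every $z$
\[\beta(\mathcal R_0)=\dim_{\C}\bigl(\mathcal S_z\cap L^2(H,\R_+)\bigr)-d .\]
As $d$ is independent of $z$ and the limit circle hypothesis gives $\mathcal S_{z_0}\subseteq L^2(H,\R_+)$ for some $z_0$, the theorem reduces to showing that \emph{every} solution of (\ref{ca}) at \emph{every} $z\in\C$ lies in $L^2(H,\R_+)$.

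This propagation of the limit circle property in the spectral parameter is the heart of the proof, and I expect the attendant uniform square-integrability estimate to be the main obstacle; the difficulty is that $H$ need not be invertible, so one must work throughout in the norm of $L^2(H,\R_+)$ rather than with pointwise bounds on solutions. I would proceed by variation of parameters. Fix a fundamental matrix $W$ of (\ref{ca}) at $z_0$; since $\operatorname{tr}(J^{-1}H)\equiv0$, $\det W$ is constant, so $W(x)$ is invertible for all $x$, and the limit circle hypothesis at $z_0$ says exactly that $\int_0^\infty\|H^{1/2}(x)W(x)\|^2\,dx<\infty$ (the columns of $W$ span $\mathcal S_{z_0}$). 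Using in addition the identity $W(x)^*JW(x)=W(0)^*JW(0)+2i\operatorname{Im}(z_0)\int_0^x W^*HW\,dt$, one sees that $W(x)^*JW(x)$ — hence also $(W(x)^*JW(x))^{-1}=W(x)^{-1}J^{-1}(W(x)^{-1})^*$ — stays bounded on $\R_+$, which forces $\int_0^\infty\|W(x)^{-1}J^{-1}H^{1/2}(x)\|^2\,dx<\infty$ as well. Now given any solution $u$ of (\ref{ca}) at an arbitrary $z$, write $u=W\xi$; then $\xi$ satisfies the Volterra equation $\xi(x)=\xi(0)+(z-z_0)\int_0^x W^{-1}J^{-1}H^{1/2}\cdot H^{1/2}W\xi\,dt$, and multiplying by $H^{1/2}W(x)$ and applying the Cauchy--Schwarz inequality to the integral yields, with $g:=H^{1/2}u$, an estimate $\int_0^N|g|^2\,dt\le A+B\int_0^N\|H^{1/2}W\|^2\bigl(\int_0^t|g|^2\,ds\bigr)dt$ valid for all $N$, with $A,B<\infty$ independent of $N$. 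Gronwall's inequality together with $\int_0^\infty\|H^{1/2}W\|^2<\infty$ then bounds $\int_0^N|g|^2\,dt$ uniformly in $N$, i.e.\ $u\in L^2(H,\R_+)$. Thus every solution at every $z$ is square-integrable, and the displayed formula yields $\beta(\mathcal R_0)=2-d$ for all $z\in\C$, as claimed.

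Alternatively, one can phrase the last step within the theory of symmetric relations: $\dim\ker(\mathcal R_0^*-z)$ is constant on each connected component of the set of points of regular type of $\mathcal R_0$, a set which always contains $\C\setminus\R$, and one shows that in the limit circle case $\mathcal R_0-\lambda$ is bounded below for every real $\lambda$ too, so that this set is all of $\C$; but that route rests on essentially the same estimate.
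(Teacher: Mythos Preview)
Your argument is correct and takes a genuinely different route from the paper. You propagate the limit--circle property from one $z_0$ to every $z\in\C$ by a direct ODE estimate: variation of parameters around the fundamental matrix $W$ at $z_0$, the Wronskian-type identity $W^*JW=J+2i\operatorname{Im}(z_0)\int_0^xW^*HW$ (which keeps $W^{-1}J^{-1}(W^{-1})^*$ bounded and hence $\|W^{-1}J^{-1}H^{1/2}\|\le C\|H^{1/2}W\|$), Cauchy--Schwarz, and Gronwall. This is the classical Weyl propagation argument, adapted carefully to the degenerate weight $H$; the bookkeeping of the null space $\mathcal C_0$ of constant solutions annihilated by $H$ is a refinement the paper suppresses. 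The paper argues instead by spectral theory: it builds the one-parameter family of self-adjoint extensions $\mathcal T^{\alpha,p}$, shows via an explicit Green kernel that each has Hilbert--Schmidt (hence compact) resolvent, so that $\sigma(\mathcal T^{\alpha,p})$ consists only of eigenvalues; then, if $\beta(\mathcal R_0,\lambda)<2$ at some real $\lambda$, one gets $\lambda\notin\Gamma(\mathcal R_0)$, forcing $\lambda\in\sigma(\mathcal T^{\alpha,p})$ for \emph{every} boundary angle $\alpha$ at $0$, which manufactures two independent $L^2$ solutions at $\lambda$ and a contradiction. Your approach is shorter and self-contained, needing none of the Green-function or compact-resolvent machinery; the paper's route is longer but delivers as a by-product that every self-adjoint realization in the limit--circle case has purely discrete spectrum, structural information your estimate does not produce.
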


 The immediate consequence of the Theorem \ref{lc} is the following theorem

\begin{theorem}[de Branges] \label{db}  The canonical systems with $\operatorname{tr}H \equiv 1$ prevail the limit point case.   \end{theorem}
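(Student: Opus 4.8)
The plan is to deduce Theorem~\ref{db} from Theorem~\ref{lc} by looking at the point $z=0$, where (\ref{ca}) is trivially solvable. Suppose, for contradiction, that the canonical system (\ref{ca}) with $\operatorname{tr}H\equiv 1$ is in the limit circle case. By definition there is some $z_0\in\C$ for which every solution of (\ref{ca}) at $z_0$ belongs to $L^2(H,\R_+)$. Since for each fixed $z$ the solutions of (\ref{ca}) form a two dimensional space, and the defect space at $z$ is precisely the space of $L^2(H,\R_+)$ solutions of (\ref{ca}) at $z$, this says $\beta(\mathcal R_0)=2$ at $z_0$. Theorem~\ref{lc} now forces $\beta(\mathcal R_0)\equiv 2$ on all of $\C$, and in particular $\beta(\mathcal R_0)=2$ at $z=0$; that is, \emph{every} solution of (\ref{ca}) with $z=0$ lies in $L^2(H,\R_+)$.

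I would then write down the solutions at $z=0$ explicitly: there (\ref{ca}) reads $Ju'=0$, and since $J$ is invertible this is $u'=0$, so (solutions being absolutely continuous) the solution space is exactly the set of constant functions $u\equiv c$ with $c\in\C^2$. Applying the conclusion of the first paragraph to $c=e_1$ and $c=e_2$, the standard basis vectors of $\C^2$, the constant functions $e_1$ and $e_2$ must lie in $L^2(H,\R_+)$. Writing $H=(H_{ij})$, we have $\|e_j\|^2=\int_0^\infty H_{jj}(x)\,dx$, so $\int_0^\infty H_{11}(x)\,dx<\infty$ and $\int_0^\infty H_{22}(x)\,dx<\infty$; since $H$ is positive semi-definite the integrands are nonnegative, and adding gives $\int_0^\infty\operatorname{tr}H(x)\,dx<\infty$.

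This contradicts $\operatorname{tr}H\equiv 1$, for which $\int_0^\infty\operatorname{tr}H(x)\,dx=\int_0^\infty 1\,dx=\infty$. Hence (\ref{ca}) with $\operatorname{tr}H\equiv 1$ cannot be in the limit circle case, so by the limit point/limit circle dichotomy it is in the limit point case, proving Theorem~\ref{db}. (One could phrase the contradiction slightly more directly: since $\int_0^\infty\operatorname{tr}H=\infty$, at least one of the constant solutions $e_1,e_2$ is not in $L^2(H,\R_+)$, so $\beta(\mathcal R_0)\le 1$ at $z=0$, which is incompatible with the constancy supplied by Theorem~\ref{lc} in the limit circle case.)

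Since all the analytic content is already in Theorem~\ref{lc}, I do not expect any real obstacle here; the two steps worth stating carefully are the identification of the limit circle case with $\beta(\mathcal R_0)=2$ (using that the ODE solution space is two dimensional and equals the defect space at that $z$), and the use of positive semi-definiteness of $H$ to turn finiteness of the two diagonal integrals into finiteness of $\int_0^\infty\operatorname{tr}H$.
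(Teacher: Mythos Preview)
Your proof is correct and follows essentially the same approach as the paper's own proof: assume the limit circle case, invoke Theorem~\ref{lc} to get $\beta(\mathcal R_0,z)=2$ for all $z\in\C$, then observe that at $z=0$ the constant solutions $e_1,e_2$ would have to lie in $L^2(H,\R_+)$, contradicting $\int_0^\infty\operatorname{tr}H=\infty$. Your write-up is slightly more detailed (explicitly noting that $J$ is invertible so the $z=0$ solutions are exactly the constants, and separating the two diagonal integrals before summing), but the argument is the same.
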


Theorem \ref{db} has been proved in \cite{db} by function theoretic approach. However the proof was not easily readable to me and we thought of providing an alternate and simple  proof of the theorem.
 
In order to prove the main theorems we use the results from the papers  \cite{HW2,RC,HW3} and use the spectral theory of a linear relation from \cite{ka}.  \\
 
Let $\mathcal{H}$ be a Hilbert space over  $\C$ and denote  by $ \mathcal H^2$ the Hilbert space $ \mathcal {H \oplus H}.$ A linear relation $ \mathcal R = \{(f,g): f,g \in\mathcal{H} \} $ on $\mathcal H$ is a subspace of $ \mathcal H^2.$ The adjoint of $\mathcal R$ on $ \mathcal H$ is a closed linear relation defined by \[ \mathcal R^* = \{ (h,k) \in \mathcal H^2 : \,\,   \ip{g}{h}=\ip{f}{k} \text { for all } (f,g) \in \mathcal R \}.\] A linear relation $ \mathcal S $ is called symmetric if $\mathcal S\subset \mathcal S^*$  and self-adjoint if $\mathcal S = \mathcal S^*$.
  The theory of such relations can  be found in \cite{ka, db,coddington, snoo}. The  regularity domain of $\mathcal R$ is the set \begin{align*}\Gamma(\mathcal R)= \{ z\in \C  :  \exists \,\, C(z)>0 \text{  such that } \|(zf-g)\| \geq C(z) \|f\| \text { for  all  } (f,g) \in \mathcal R\} \end{align*}
  The following theorem has been derived from \cite{ka}.
  
    \begin{theorem}\label{sr1} Suppose $ \mathcal T $ is a self-adjoint relation and suppose $ z\in \Gamma(\mathcal T) $ then \[ \mathcal H = \{ zf-g : \,\,\,\, (f,g) \in  \mathcal T \}.\] \end{theorem}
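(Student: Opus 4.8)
The plan is to prove that the subspace $\mathcal R_z:=\{\,zf-g : (f,g)\in\mathcal T\,\}$ is simultaneously closed and dense in $\mathcal H$; since a closed dense subspace of a Hilbert space is the whole space, this gives the assertion.

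\textbf{Closedness.} Since $\mathcal T=\mathcal T^{*}$ and adjoint relations are closed, $\mathcal T$ is a closed subspace of $\mathcal H^{2}$. Suppose $(f_n,g_n)\in\mathcal T$ with $zf_n-g_n\to h$. As $\mathcal T$ is a linear subspace, $(f_n-f_m,\,g_n-g_m)\in\mathcal T$, so the defining inequality of $\Gamma(\mathcal T)$ gives $C(z)\,\|f_n-f_m\|\le\|z(f_n-f_m)-(g_n-g_m)\|=\|(zf_n-g_n)-(zf_m-g_m)\|\to 0$; hence $(f_n)$ is Cauchy, $f_n\to f$ for some $f\in\mathcal H$, and then $g_n=zf_n-(zf_n-g_n)\to zf-h=:g$. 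Closedness of $\mathcal T$ yields $(f,g)\in\mathcal T$, and $zf-g=h$, so $h\in\mathcal R_z$.

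\textbf{Density.} I would show $\mathcal R_z^{\perp}=\{0\}$. Suppose $h\in\mathcal H$ satisfies $\ip{zf-g}{h}=0$ for every $(f,g)\in\mathcal T$. Writing this out, with the inner product conjugate linear in its first slot, gives $\ip{g}{h}=\ip{f}{\bar z h}$ for all $(f,g)\in\mathcal T$, which is precisely the statement $(h,\bar z h)\in\mathcal T^{*}=\mathcal T$. Now split into two cases. If $z\in\R$ then $(h,zh)\in\mathcal T$, and the regularity inequality applied to this pair gives $0=\|zh-zh\|\ge C(z)\|h\|$ with $C(z)>0$, so $h=0$. If $z\notin\R$ I use symmetry instead: from $(h,\bar z h)\in\mathcal T\subset\mathcal T^{*}$ and the definition of the adjoint, testing against the element $(h,\bar z h)\in\mathcal T$ itself, we get $\ip{\bar z h}{h}=\ip{h}{\bar z h}$, i.e.\ $z\|h\|^{2}=\bar z\|h\|^{2}$, and $z\ne\bar z$ forces $h=0$. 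In both cases $\mathcal R_z^{\perp}=\{0\}$, so $\mathcal R_z$ is dense.

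Together these give $\mathcal R_z=\mathcal H$, as claimed. I expect the density step to be the only delicate point: one must correctly identify the orthogonal complement of $\mathcal R_z$ with a defect element $(h,\bar z h)$ of $\mathcal T$ and then observe that the hypothesis $z\in\Gamma(\mathcal T)$ is genuinely needed only when $z$ is real, self-adjointness alone handling the non-real case. As an alternative to the case split, one can instead invoke from \cite{ka} that the defect number $\dim\bigl(\mathcal H\ominus\mathcal R_z\bigr)$ is locally constant on the open set $\Gamma(\mathcal T)$ and vanishes on $\C\setminus\R$, hence on all of $\Gamma(\mathcal T)$; combined with the closedness step this again yields $\mathcal R_z=\mathcal H$.
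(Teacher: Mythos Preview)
The paper does not supply its own proof of this statement: it is quoted from \cite{ka} with the phrase ``The following theorem has been derived from \cite{ka}'' and no argument is given. So there is nothing in the paper to compare against.

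Your proof is correct and is essentially the standard argument one would expect: closedness of the range follows from closedness of $\mathcal T$ together with the lower bound defining $\Gamma(\mathcal T)$, and density follows by identifying $\mathcal R_z^{\perp}$ with $\{h:(h,\bar z h)\in\mathcal T^{*}=\mathcal T\}$ and then killing such $h$ either via the regularity inequality (real $z$) or via symmetry (non-real $z$). Your conventions match the paper's (inner product conjugate-linear in the first slot, adjoint defined via $\ip{g}{h}=\ip{f}{k}$), so the identification $(h,\bar z h)\in\mathcal T$ is the right one. The alternative you sketch at the end---local constancy of the defect index on $\Gamma(\mathcal T)$, combined with its vanishing on $\C\setminus\R$---is also valid and is in fact closer in spirit to what the paper invokes from \cite{ka} elsewhere; note only that one should observe $\Gamma(\mathcal T)$ is open, so any real point of $\Gamma(\mathcal T)$ lies in the same connected component as nearby non-real points.
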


   The defect index $ \beta(\mathcal R,z)$ is the dimension of defect space \[ R(z-\mathcal R )^{\perp}= \{zf-g: \,\, (f,g) \in  \mathcal R \}^{\perp} \]  It has been shown in \cite{ka} that the defect index $\beta(\mathcal R,z)$  is constant on each connected subset of $\Gamma(\mathcal R)$. Moreover, if $\mathcal R$ is symmetric, then the defect index is constant in the upper and lower half-planes. In addition, it is worth mentioning here the following theorem from \cite{ka} which provides us the condition for a symmetric relation on a Hilbert space to have self-adjoint extension

\begin{theorem} \label{srd}$(1)$ A symmetric relation $\mathcal R $ possess self-adjoint extension if and only if $\beta(\mathcal R,z)$ on lower and upper half-planes are equal.\\
$(2)$ A symmetric extension $\mathcal R'$ of $\mathcal R $ is self-adjoint if and only if $\mathcal R'$ is an $\beta(\mathcal R,z)$-dimensional extension of  $ \mathcal R.$ \end{theorem}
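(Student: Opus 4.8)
This is the linear-relation version of von~Neumann's extension theory (the one item in the excerpt that is merely quoted from \cite{ka}); the plan is to construct a von~Neumann-type decomposition of $\mathcal R^*$, pass to the quotient $\mathcal R^*/\mathcal R$ carrying the induced boundary form, and read off both assertions from the structure of that inner-product space. Throughout we may assume $\mathcal R$ is closed, since a self-adjoint relation is closed and $\mathcal R$ and $\overline{\mathcal R}$ have the same adjoint, hence the same defect indices and the same self-adjoint extensions. Fix $z_0\in\C$ with $\operatorname{Im}z_0>0$. For $(f,g)\in\mathcal R$ symmetry gives $\langle f,g\rangle\in\R$, hence $\|\lambda f-g\|^2=\|(\operatorname{Re}\lambda)f-g\|^2+(\operatorname{Im}\lambda)^2\|f\|^2$ and every $\lambda\in\C\setminus\R$ lies in $\Gamma(\mathcal R)$; with $\mathcal R$ closed, $R(\lambda-\mathcal R)$ is then closed with orthogonal complement $\mathcal N_{\bar\lambda}:=\ker(\mathcal R^*-\bar\lambda)$. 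Write $\widehat{\mathcal N}_\mu=\{(\varphi,\mu\varphi):\varphi\in\mathcal N_\mu\}$, and, using the cited constancy of $\beta(\mathcal R,\cdot)$ on each open half-plane, set $n_\pm=\beta(\mathcal R,z)$ for $\pm\operatorname{Im}z>0$, so $\dim\mathcal N_\mu$ equals $n_-$ or $n_+$ according as $\operatorname{Im}\mu>0$ or $\operatorname{Im}\mu<0$.

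\textbf{Step 1: the von~Neumann decomposition.} I would first prove that $\mathcal R^*=\mathcal R\oplus\widehat{\mathcal N}_{z_0}\oplus\widehat{\mathcal N}_{\bar z_0}$, an algebraic direct sum in $\mathcal H^2$. The inclusion $\supseteq$ is immediate. For $\subseteq$, given $(f,g)\in\mathcal R^*$ decompose $z_0f-g=(z_0f_0-g_0)+\varphi$ with $(f_0,g_0)\in\mathcal R$ and $\varphi\in R(z_0-\mathcal R)^\perp=\mathcal N_{\bar z_0}$, put $\psi_-=\tfrac{1}{z_0-\bar z_0}\varphi\in\mathcal N_{\bar z_0}$, and check by a short computation that $(f,g)-(f_0,g_0)-(\psi_-,\bar z_0\psi_-)=(\psi_+,z_0\psi_+)$ with $\psi_+\in\mathcal N_{z_0}$. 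Directness of the sum follows from $z_0\ne\bar z_0$ and from $z_0,\bar z_0\in\Gamma(\mathcal R)$, which forces $\mathcal R\cap(\widehat{\mathcal N}_{z_0}+\widehat{\mathcal N}_{\bar z_0})=\{0\}$. Hence $Q:=\mathcal R^*/\mathcal R$ is linearly isomorphic to $\widehat{\mathcal N}_{z_0}\oplus\widehat{\mathcal N}_{\bar z_0}$, of dimension $n_++n_-$.

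\textbf{Step 2: the boundary form and the characterization.} On $\mathcal R^*$ put $[(f,g),(f',g')]=\langle g,f'\rangle-\langle f,g'\rangle$. Since $\mathcal R$ is closed and symmetric, $(\mathcal R^*)^*=\mathcal R^{**}=\mathcal R$, which is precisely the statement that $\mathcal R$ is the radical of $[\cdot,\cdot]$; so $[\cdot,\cdot]$ descends to a nondegenerate form on $Q$, still denoted $[\cdot,\cdot]$. For an intermediate relation $\mathcal R\subseteq\mathcal R'\subseteq\mathcal R^*$: $\mathcal R'$ is symmetric iff $\mathcal R'/\mathcal R$ is $[\cdot,\cdot]$-neutral in $Q$; and, because any symmetric $\mathcal R'\supseteq\mathcal R$ satisfies $\mathcal R'\subseteq(\mathcal R')^*\subseteq\mathcal R^*$ with $(\mathcal R')^*$ corresponding to the $[\cdot,\cdot]$-orthogonal complement of $\mathcal R'/\mathcal R$ in $Q$, $\mathcal R'$ is self-adjoint iff $\mathcal R'/\mathcal R$ equals its own $[\cdot,\cdot]$-orthogonal complement in $Q$.

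\textbf{Step 3: signature and conclusion.} Evaluating $[\cdot,\cdot]$ on the two summands gives $\pm2i(\operatorname{Im}z_0)\langle\varphi,\psi\rangle$, with opposite signs on $\widehat{\mathcal N}_{z_0}$ and $\widehat{\mathcal N}_{\bar z_0}$; hence $\frac{1}{2i\operatorname{Im}z_0}[\cdot,\cdot]$ is positive definite on the image of one summand and negative definite on the other, and being nondegenerate on a space of dimension $n_++n_-$ it has signature exactly $(n_-,n_+)$ (in the Krein-space sense if the defect is infinite). By the elementary structure theory of such an inner-product space, a subspace equal to its own orthogonal complement exists iff the two signature numbers coincide, and then every such subspace --- equivalently, every neutral subspace of that common dimension --- is maximal neutral and equals its orthogonal complement. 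Translating through Step~2 gives the two assertions: (1) $\mathcal R$ has a self-adjoint extension iff $n_+=n_-$, i.e.\ iff $\beta(\mathcal R,z)$ agrees on the upper and lower half-planes; and (2) assuming $n:=n_+=n_-$, a symmetric extension $\mathcal R'$ of $\mathcal R$ is self-adjoint iff $\dim(\mathcal R'/\mathcal R)=n$, i.e.\ iff $\mathcal R'$ is a $\beta(\mathcal R,z)$-dimensional extension of $\mathcal R$.

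\textbf{Main obstacle, and an alternative.} The routine parts are the short computation inside Step~1 and the evaluation of the boundary form in Step~3; the real work is establishing the von~Neumann decomposition for relations rather than operators (the multivalued parts must be tracked) and, when the defect indices are infinite, casting Step~3 in Krein-space language so that the dimension count in (2) still reads correctly. A fully parallel route avoids the decomposition via the Cayley transform $C_{z_0}(\mathcal R)=\{(z_0f-g,\ \bar z_0f-g):(f,g)\in\mathcal R\}$, a closed isometric operator whose defect spaces have dimensions $n_\mp$; self-adjoint extensions of $\mathcal R$ correspond to unitary extensions of $C_{z_0}(\mathcal R)$, whose existence (iff $n_+=n_-$) and parametrization by isometries between the defect spaces are classical, and the inverse Cayley transform --- single-valued because $\operatorname{Im}z_0\ne0$ --- carries the conclusions back to $\mathcal R$.
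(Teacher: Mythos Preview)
Your plan is sound, and in fact there is nothing in the paper to compare it against: Theorem~\ref{srd} is not proved here at all but is quoted verbatim from \cite{ka}, as you yourself noted in your opening sentence. The paper uses it as a black box in the proofs of Lemma~2.4 and Lemma~2.10.

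As for the content of your sketch, it is the standard route and the details check. In Step~1 your splitting of $z_0f-g$ along $R(z_0-\mathcal R)\oplus\mathcal N_{\bar z_0}$ and the identification $\psi_+=f-f_0-\psi_-$ do give $(\psi_+,z_0\psi_+)\in\mathcal R^*$; directness follows since $z_0f_0-g_0\in R(z_0-\mathcal R)$ is forced orthogonal to $(z_0-\bar z_0)\psi_-$, hence both vanish, and then $z_0\in\Gamma(\mathcal R)$ kills $(f_0,g_0)$. In Step~3 the two summands are indeed $[\cdot,\cdot]$-orthogonal to each other (the cross term is $\bar z_0\langle\psi_+,\psi_-\rangle-\bar z_0\langle\psi_+,\psi_-\rangle=0$), so $Q$ really carries a genuine fundamental decomposition and the signature count goes through. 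Your Cayley-transform alternative is the other textbook proof and is equally valid; it is in fact closer in spirit to how such results are typically established in the relation literature (e.g.\ Dijksma--de~Snoo \cite{snoo}). The only caveat worth flagging is that part~(2) as stated in the paper tacitly assumes $n_+=n_-$ is finite so that ``$\beta(\mathcal R,z)$-dimensional extension'' is unambiguous; your Krein-space remark for the infinite-defect case is the right way to interpret it, though in the paper's application the defect is always at most~$2$.
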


The resolvent set for a closed relation  $\mathcal R$  is the set  \[\rho ( \mathcal R ) = \Big \{z \in \C : \exists \,\,\, T \in B(\mathcal H) \text{ such that }   \mathcal R =  \{ (Tf,zTf-f) : \,\, f\in \mathcal H \}\Big \}\] and the spectrum of $\mathcal R$ is  $\sigma ( \mathcal R )= \C- \rho ( \mathcal R).$\\ 

 We call $S(\mathcal R)= \C- \Gamma(\mathcal R)$ the \emph{spectral kernel} of $ \mathcal R.$ For a self adjoint relation $ \mathcal T $ and $ T = (\mathcal T-z)^{-1}, \,\,z \in \Gamma( \mathcal T).$ The following theorem from \cite{ka}  shows the relation between the spectral kernel and spectrum of a self-adjoint relation.

 \begin{theorem}\label{srp} \noindent

 \begin{enumerate} \item$S(\mathcal T) = \sigma(\mathcal T)$
                  \item If $ \lambda \in S(\mathcal T)$ then $ \frac{1}{z-\lambda}\in S(T)$
                   \item $ S(T) \subset \sigma(T)$ \end{enumerate}
 \end{theorem}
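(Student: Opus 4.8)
The plan is to prove the three assertions by reducing everything to the bounded operator $T$ and to the description of the spectral kernel $S(\cdot)$ as the set of points at which the relation, resp. operator, fails to be bounded below. First, for (1), I would prove the equivalent statement $\Gamma(\mathcal T)=\rho(\mathcal T)$. The inclusion $\rho(\mathcal T)\subseteq\Gamma(\mathcal T)$ is easy: if $z\in\rho(\mathcal T)$ then $\mathcal T=\{(Tf,zTf-f):f\in\mathcal H\}$ with $T\in B(\mathcal H)$, and for $(g,h)=(Tf,zTf-f)\in\mathcal T$ one has $zg-h=f$, hence $\|g\|=\|Tf\|\le\|T\|\,\|zg-h\|$, so $z\in\Gamma(\mathcal T)$ with $C(z)=\|T\|^{-1}$ (the degenerate case $T=0$ is trivial). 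For the converse, fix $z\in\Gamma(\mathcal T)$: by Theorem \ref{sr1} the relation $z-\mathcal T$ maps onto $\mathcal H$, while the inequality defining $\Gamma(\mathcal T)$ shows that $zf-g=0$ with $(f,g)\in\mathcal T$ forces $f=0$; so I can set $Tw:=f$ whenever $(f,g)\in\mathcal T$ and $zf-g=w$, then check that $T$ is well defined, linear and bounded with $\|T\|\le C(z)^{-1}$, and verify $\mathcal T=\{(Tw,zTw-w):w\in\mathcal H\}$, giving $z\in\rho(\mathcal T)$. I expect this to be the main step; its engine is Theorem \ref{sr1}, namely that self-adjointness is precisely what upgrades ``bounded below'' to ``boundedly invertible''.

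For (2), fix $z\in\Gamma(\mathcal T)$ and set $T=(\mathcal T-z)^{-1}$, so that $\mathcal T=\{(Tf,zTf-f):f\in\mathcal H\}$. Suppose $\lambda\in S(\mathcal T)$, i.e.\ $\lambda\notin\Gamma(\mathcal T)$; since $z\in\Gamma(\mathcal T)$ this already forces $z\ne\lambda$. Negating the defining inequality of $\Gamma(\mathcal T)$ at $\lambda$ produces, for each $n\in\N$, a pair $(f_n,g_n)\in\mathcal T$ with $\|f_n\|=1$ (necessarily $f_n\ne0$, since $f_n=0$ would give $\|g_n\|<0$) and $\|\lambda f_n-g_n\|<1/n$; write $\varepsilon_n=\lambda f_n-g_n$. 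Setting $w_n=zf_n-g_n$, I would note that $f_n=Tw_n$ and $w_n=(z-\lambda)f_n+\varepsilon_n$, so that $\|w_n\|\to|z-\lambda|>0$, while a direct computation gives $\tfrac{1}{z-\lambda}w_n-Tw_n=\tfrac{1}{z-\lambda}\varepsilon_n\to0$. Hence no $C>0$ can satisfy $\|\tfrac{1}{z-\lambda}w-Tw\|\ge C\|w\|$ for all $w\in\mathcal H$, that is $\tfrac{1}{z-\lambda}\notin\Gamma(T)$, so $\tfrac{1}{z-\lambda}\in S(T)$. The only delicate point is that $\|w_n\|$ must stay bounded away from $0$, which is why I keep track of $\varepsilon_n$ explicitly.

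For (3) it suffices, after passing to complements in $\C$, to show $\rho(T)\subseteq\Gamma(T)$. If $\mu\in\rho(T)$ then, since $T\in B(\mathcal H)$ is an operator, unwinding the resolvent-set definition shows $(\mu-T)^{-1}\in B(\mathcal H)$ in the usual sense, whence $\|f\|\le\|(\mu-T)^{-1}\|\,\|\mu f-Tf\|$ for all $f\in\mathcal H$, so $\mu\in\Gamma(T)$. This gives $S(T)\subseteq\sigma(T)$. One should not expect equality here: for a general bounded operator $S(T)$ is the approximate point spectrum, which may be strictly smaller than $\sigma(T)$ — by contrast, the self-adjoint relation $\mathcal T$ of part (1) has no residual spectrum, which is exactly why (1) is an equality.
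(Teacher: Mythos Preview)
Your proof is correct. Note, however, that the paper does not actually prove Theorem~\ref{srp}: it is quoted from reference~\cite{ka}, so there is no proof in the paper to compare yours against. Your argument is the standard one and matches what one would expect to find in~\cite{ka}: part~(1) is exactly the statement that for a self-adjoint relation the regularity domain and the resolvent set coincide, with Theorem~\ref{sr1} supplying the surjectivity needed for the nontrivial inclusion $\Gamma(\mathcal T)\subseteq\rho(\mathcal T)$; part~(2) is the approximate-point-spectrum version of the spectral mapping $\lambda\mapsto(z-\lambda)^{-1}$, which you carry out cleanly by tracking a Weyl sequence; and part~(3) is the trivial inclusion of approximate point spectrum in spectrum. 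One small remark: the paper writes $T=(\mathcal T-z)^{-1}$, whereas the parametrisation $\mathcal T=\{(Tf,zTf-f):f\in\mathcal H\}$ in the definition of $\rho(\mathcal T)$ actually gives $T=(z-\mathcal T)^{-1}$; this sign inconsistency is already present in the paper, and your computation in~(2) is internally consistent with the formula $\tfrac{1}{z-\lambda}$ as stated, so there is nothing to fix on your side.
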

In the next section we discuss about the linear relation induced by a canonical system and prove our main theorems
\section{Relation induced by a Canonical System on $ L^2(H, \R_+)$ and Proof of the Main Theorems} \label{rcs}

 Consider a relation $\mathcal R $ in the Hilbert space $ L^2(H, \R_+)$  induced by (\ref{ca}) as \[ \mathcal R = \{ (f,g) \in \big( L^2(H, \R_+)\big)^2 : \,\, f \in AC, Jf'=Hg\}\] and  is called the maximal relation. This  relation is made up of pairs of equivalence classes $(f,g)$, such that there exists a locally absolutely continuous representative of  $f$ again denoted by $f$, and a representative of $g$, again denoted by $g$, such that $Jf'=Hg \,\, a.e.$ on $\R_+.$ The adjoint relation $ \mathcal R_0 = \mathcal R^*$  is defined by \[  \mathcal R_0 = \{ (f,g) \in \big( L^2(H, \R_+)\big)^2 : \ip{g}{h}= \ip{f}{k} \,\, \text {  for all } (h,k) \in \mathcal R \}\] and is called the minimal relation. It has been shown in \cite{HW2} that $ \mathcal R_0$ is close and symmetric. Moreover, $\mathcal R_0 \subset \mathcal R$ and $(\mathcal R_0)^* = \mathcal R$

\begin{lemma}\cite{HW2}  \label{src2}For each $c,d \in \C^2$ there exists $(\phi_0,\psi_0),(\phi_N,\psi_N) \in \mathcal R$ such that $\phi_0, \phi_N$ have compact support and $\phi(0+)=c, \phi(N-) =d.$ \end{lemma}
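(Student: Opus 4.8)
The idea is to realize both required pairs in the single form $\phi(x)=J\int_x^{\infty}H(t)g(t)\,dt$ for a suitably chosen, compactly supported ``$g$''-component, and then to read off the boundary value from the fact that such a $\phi$ is constant to the left of $\operatorname{supp}g$. First I would record the one analytic ingredient: local integrability of the entries of $H$ is equivalent to local integrability of $\operatorname{tr}H$, and positive semi-definiteness gives $|H_{ij}|\le\operatorname{tr}H$, so for every $g\in L^2(H,\R_+)$ and every finite $N$, Cauchy--Schwarz yields $\int_0^N\|H(t)g(t)\|\,dt\le(\int_0^N\operatorname{tr}H)^{1/2}(\int_0^N g^{*}Hg)^{1/2}<\infty$, whence $Hg\in L^1_{\mathrm{loc}}(\R_+)$. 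Consequently, for any compactly supported $g\in L^2(H,\R_+)$ the function $\phi(x):=J\int_x^{\infty}H(t)g(t)\,dt$ is absolutely continuous, is supported in a bounded set (so it has compact support, and, being bounded with compact support, lies in $L^2(H,\R_+)$), and satisfies $\phi'=-JHg$, hence $J\phi'=-J^{2}Hg=Hg$; thus $(\phi,g)\in\mathcal R$. Moreover, if $\operatorname{supp}g\subset[p,q]$ then $\phi\equiv J\int_p^q H(t)g(t)\,dt$ on $[0,p]$.

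To produce $(\phi_0,\psi_0)$ I would take $g$ supported in an interval $[0,a]$, so that $\phi_0(0+)=J\int_0^a H(t)g(t)\,dt$, and require $\int_0^a H(t)g(t)\,dt=-Jc$ (using $J^{-1}=-J$). The crux is that the linear map $g\mapsto\int_0^a H(t)g(t)\,dt$ is already onto $\C^2$ when restricted to constant functions, provided $H$ carries no fixed nonzero vector in $\ker H(t)$ for a.e.\ $t\in[0,a]$: indeed, if $w^{*}\int_0^a H(t)w\,dt=0$ then $w^{*}H(t)w=0$ a.e., so $H(t)w=0$ a.e.\ on $[0,a]$, forcing $w=0$. (Some such nondegeneracy of $H$ is genuinely needed; for $H\equiv\operatorname{diag}(1,0)$, for instance, the first component of $\phi_0(0+)$ is forced to vanish. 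It is part of the standing assumptions under which the cited result is proved.) Choosing $w$ with $\int_0^a H(t)w\,dt=-Jc$ and setting $g:=w$ on $[0,a]$, $g:=0$ beyond, gives $(\phi_0,\psi_0)$ as required. The pair $(\phi_N,\psi_N)$ is obtained in exactly the same way with $g$ supported in some interval $[N,b]$: then $\phi_N$ is constant on $[0,N]$, so $\phi_N(N-)=J\int_N^b H(t)g(t)\,dt$, and the same surjectivity argument on $[N,b]$ yields a constant $g$ with $\int_N^b H(t)g(t)\,dt=-Jd$, giving $\phi_N(N-)=d$.

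Essentially everything here is bookkeeping; the two steps I would isolate and dispatch at the very start are the surjectivity of $g\mapsto\int Hg$ onto $\C^2$ (the only place the non-triviality of $H$ is used, and the only place where the statement could fail) and the verification that the $\phi$'s and $g$'s constructed above really belong to $L^2(H,\R_+)$ rather than merely to $L^2_{\mathrm{loc}}$ (which is immediate from local integrability of $\operatorname{tr}H$ together with compact support). With those in hand, the rest follows as indicated.
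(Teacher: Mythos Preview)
The paper does not prove this lemma; it is stated with a citation to \cite{HW2} and used as a black box. So there is no ``paper's own proof'' to compare against, and your argument should be assessed on its own.

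Your construction is the standard one and is correct. Setting $\phi(x)=J\int_x^\infty H(t)g(t)\,dt$ for compactly supported $g$ indeed gives an absolutely continuous, compactly supported $\phi$ with $J\phi'=Hg$, and your $L^1_{\mathrm{loc}}$ estimate for $Hg$ via Cauchy--Schwarz is exactly what is needed to justify the differentiation. The reduction of the boundary condition to hitting a prescribed vector with $g\mapsto\int Hg$, and then the observation that on constants this map is multiplication by the positive semidefinite matrix $\int_I H$, is the right mechanism.

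You are also right that the surjectivity step is where the content lies and that it can fail without a nondegeneracy hypothesis (your $H=\operatorname{diag}(1,0)$ example is decisive: $J\phi'=Hg$ forces $\phi_1'\equiv 0$, so compact support kills the first component of any boundary value). In the Hassi--de~Snoo--Winkler setting this is handled by the standing definiteness assumption on $H$ (no interval on which $H$ annihilates a fixed nonzero vector), which makes $\int_I H$ invertible on every interval of positive length; you should state that hypothesis explicitly rather than leave it parenthetical. One minor remark: your $\phi_N$, built from $g$ supported in $[N,b]$, is constant equal to $d$ on all of $[0,N]$, so its support is $[0,b]$ rather than a neighbourhood of $N$. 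That is still compact, so the lemma as written is satisfied; if later you need $\operatorname{supp}\phi_N\subset[0,N]$ (as the paper tacitly uses when showing $\mathcal T^{\alpha,\beta}$ is self-adjoint), simply integrate from $0$ instead of from $\infty$.
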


\begin{lemma} \cite{HW2} \label{src3} Let $(f,g), (h,k) \in \mathcal R $. Then the following limit exists: \begin{align}\displaystyle \lim_{x\rightarrow\infty}h(x)Jf(x)= h(0+)Jf(0+)-[ \ip{f}{k}-\ip{g}{h}] .\end{align} \end{lemma}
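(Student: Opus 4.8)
The statement to be proved is Lemma~\ref{src3}: for $(f,g),(h,k)\in\mathcal R$, the limit $\lim_{x\to\infty} h(x)^*Jf(x)$ exists and equals $h(0+)^*Jf(0+) - [\ip{f}{k}-\ip{g}{h}]$. (Here I read $h(x)Jf(x)$ as the scalar $h(x)^*Jf(x)$.) The natural approach is a Green's-type / Lagrange-identity computation on a finite interval $[0,N]$, followed by taking $N\to\infty$. First I would write down the finite-interval identity: since $Jf'=Hg$ and $Jk^{\,\prime}\!$—wait, more precisely $Jh'=Hk$—both hold a.e., differentiate the absolutely continuous scalar function $x\mapsto h(x)^*Jf(x)$. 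Using $J^*=-J$ and $J^2=-I$, one gets
\[
\frac{d}{dx}\big(h(x)^*Jf(x)\big)=h'(x)^*Jf(x)+h(x)^*Jf'(x)=-(Hk)(x)^*J^{-1}Jf(x)+\cdots
\]
and after simplification the derivative equals $h(x)^*H(x)g(x)-k(x)^*H(x)f(x)$ a.e. Integrating from $0$ to $N$ gives
\[
h(N)^*Jf(N)-h(0+)^*Jf(0+)=\int_0^N h^*Hg\,dx-\int_0^N k^*Hf\,dx .
\]

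**Taking the limit.** Since $(f,g),(h,k)\in\mathcal R\subset L^2(H,\R_+)^2$, both integrands $h^*Hg$ and $k^*Hf$ are in $L^1(\R_+)$ (by Cauchy--Schwarz for the semi-inner product $\ip{\cdot}{\cdot}$, using that $H$ is positive semi-definite), so the right-hand side converges as $N\to\infty$ to $\ip{g}{h}-\ip{f}{k}$ (being careful about the conjugation convention in the definition $\ip{f}{g}=\int f^*Hg$). Hence the left-hand side converges, i.e. $\lim_{N\to\infty} h(N)^*Jf(N)$ exists, and rearranging yields exactly the claimed formula $\lim_{x\to\infty}h(x)^*Jf(x)=h(0+)^*Jf(0+)-[\ip{f}{k}-\ip{g}{h}]$.

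**Main obstacle.** The computation itself is routine; the only genuine care-points are (i) getting the sign/conjugation bookkeeping right so that the cross terms collapse into $h^*Hg-k^*Hf$ and match the stated sign of $[\ip{f}{k}-\ip{g}{h}]$, and (ii) justifying the integrability step cleanly—i.e. that membership of $(f,g)$ and $(h,k)$ in $L^2(H,\R_+)^2$ really forces $h^*Hg,\ k^*Hf\in L^1(\R_+)$ despite $H$ being only positive semi-definite (this is the Cauchy--Schwarz inequality $|\int_0^N h^*Hg|\le \|h\|\,\|g\|$ applied on $[0,N]$, uniformly in $N$). Since this lemma is quoted from \cite{HW2} rather than reproved, I would simply cite that reference; if a self-contained argument were wanted, the two steps above are all that is needed.
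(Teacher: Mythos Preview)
Your proposal is correct: the Lagrange-identity computation $\frac{d}{dx}\big(h(x)^*Jf(x)\big)=h(x)^*H(x)g(x)-k(x)^*H(x)f(x)$, integration over $[0,N]$, and passage to the limit via Cauchy--Schwarz is exactly the standard argument, and your remarks about the two care-points (sign/conjugation bookkeeping and $L^1$-integrability of the cross terms) are on target. The paper itself does not supply a proof of this lemma but merely quotes it from \cite{HW2}, so there is no ``paper's own proof'' to compare against; your self-contained argument is precisely what one would expect that reference to contain, and indeed the paper's subsequent use of the lemma (in the proof of the description of $\mathcal R_0$) matches your integrated identity. One minor caveat: the paper's displayed formula writes $h(x)Jf(x)$ and $\ip{f}{k}-\ip{g}{h}$, whereas your derivation naturally produces $h(x)^*Jf(x)$ and $\ip{h}{g}-\ip{k}{f}$; these differ by a complex conjugation, reflecting notational looseness in the paper (visible also in the next proof, where $f^*J\phi$ and $\phi Jf$ are used interchangeably), not an error in your argument.
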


 \begin{lemma}
The minimal relation $\mathcal R_0$ is given by \[\mathcal R_0=\{(f,g) \in \mathcal R : f(0+)=0,\,\, \displaystyle \lim_{x\rightarrow \infty} f^*(x)Jh(x)=0 \text {  for  all } (h,k) \in \mathcal R \} .\] \end{lemma}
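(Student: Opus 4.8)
The plan is to prove the two inclusions defining $\mathcal R_0$ separately, leaning on the identification $\mathcal R_0=\mathcal R^*$, the Lagrange-type formula of Lemma~\ref{src3}, the compactly supported elements with prescribed endpoint values furnished by Lemma~\ref{src2}, and the already-recorded inclusion $\mathcal R_0\subset\mathcal R$ (so that every $(f,g)\in\mathcal R_0$ has an absolutely continuous representative and a well-defined $f(0+)$). One preliminary remark will be useful: for column vectors the boundary term $h(x)Jf(x)$ abbreviates $h(x)^*Jf(x)$, and since $J$ is real with $J^*=-J$ one has $\overline{h^*Jf}=-\,f^*Jh$; hence $\lim_{x\to\infty}h(x)^*Jf(x)=0$ exactly when $\lim_{x\to\infty}f^*(x)Jh(x)=0$, and I will pass between the two freely.

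For the inclusion $\subseteq$, I would start from $(f,g)\in\mathcal R_0=\mathcal R^*$: for every $(h,k)\in\mathcal R$ this gives $\ip{g}{h}=\ip{f}{k}$, so the bracket $\ip{f}{k}-\ip{g}{h}$ in Lemma~\ref{src3} drops out and that identity collapses to $\lim_{x\to\infty}h(x)Jf(x)=h(0+)Jf(0+)$. To extract $f(0+)=0$ I would feed in, for an arbitrary $c\in\C^2$, the pair $(\phi_0,\psi_0)\in\mathcal R$ from Lemma~\ref{src2} with $\phi_0$ compactly supported and $\phi_0(0+)=c$; the left side of the collapsed identity is then $0$, so $c^*Jf(0+)=0$ for all $c\in\C^2$, and invertibility of $J$ forces $f(0+)=0$. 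Substituting this back into the collapsed identity yields $\lim_{x\to\infty}h(x)Jf(x)=0$ for every $(h,k)\in\mathcal R$, which is precisely the remaining condition, so $(f,g)$ lies in the set on the right.

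For the reverse inclusion I would take $(f,g)\in\mathcal R$ satisfying $f(0+)=0$ and $\lim_{x\to\infty}f^*(x)Jh(x)=0$ for all $(h,k)\in\mathcal R$, fix such an $(h,k)$, and read Lemma~\ref{src3} in the other direction: its right-hand boundary contribution $h(0+)Jf(0+)$ vanishes because $f(0+)=0$, its left-hand side vanishes by hypothesis, so the identity leaves $\ip{f}{k}-\ip{g}{h}=0$. Since $(h,k)\in\mathcal R$ was arbitrary, this says exactly that $(f,g)\in\mathcal R^*=\mathcal R_0$.

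I do not expect a genuine obstacle, since the two substantive analytic inputs — the existence of the boundary limit at $\infty$ together with its Green-type evaluation (Lemma~\ref{src3}), and the availability of compactly supported elements of $\mathcal R$ realizing an arbitrary initial vector (Lemma~\ref{src2}) — are already in hand. The only points demanding mild care are the adjoint/transpose bookkeeping between $h(x)^*Jf(x)$ and $f(x)^*Jh(x)$, and the trivial-but-essential observation that $c^*Jf(0+)=0$ for all $c$ implies $f(0+)=0$ because $\det J\neq0$. If I had to name a soft spot, it is that the decoupling of ``$f(0+)=0$'' from the condition at infinity relies on Lemma~\ref{src2} being quoted with full strength, namely that $\phi_0(0+)$ can be made to equal \emph{any} vector of $\C^2$; I would make sure this surjectivity is explicit before using it.
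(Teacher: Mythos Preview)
Your proposal is correct and follows essentially the same route as the paper: both inclusions are obtained from the Lagrange identity of Lemma~\ref{src3}, and the vanishing of $f(0+)$ is forced by pairing against the compactly supported elements of Lemma~\ref{src2} with arbitrary initial vector. Your write-up is in fact more careful than the paper's about the $h^*Jf$ versus $f^*Jh$ bookkeeping and about why $c^*Jf(0+)=0$ for all $c$ yields $f(0+)=0$.
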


 \begin{proof}  By Lemma \ref{src3} we get \[ \{(f,g) \in \mathcal R : f(0+)=0,\,\, \displaystyle \lim_{x\rightarrow \infty} f^*(x)Jh(x)=0 \text {  for  all } (h,k) \in \mathcal R \}\subset \mathcal R_0.\] On the other hand  let $ (f,g)\in R_0 $. By Lemma \ref{src2} for any $u\in \C^2$ there exists $(\phi,\psi) \in \mathcal R$ such that $\phi$ has compact support and $\phi(0+)=c.$  So \begin{align*}  0  & = \ip{f}{\psi}-\ip{g}{\phi} \\ & = \displaystyle  \lim_{x\rightarrow\infty}f^*(x)J\phi(x)- \phi(0+)Jf(0+) \\ & = uJf(0+) .\end{align*} This implies that $f(0+)=0.$   This would  also forces that \[ \displaystyle  \lim_{x\rightarrow\infty}f^*(x)Jh(x)=0 \text{  for all  }  (h,k) \in \mathcal R .\]  \end{proof}
Note that the dimension of the solution space of the system (\ref{ca}) is two. 
\begin{rem} \label{ds} The defect index $\beta(\mathcal R_0)$ of the minimal relation $ \mathcal R_0$ is equal to the number of linearly independent solutions of the system (\ref{ca}) of whose class lie in $ L^2(H, \R_+)$. Therefore, in the limit circle case, the defect indices of  $\mathcal R_0$ are $(2,2).$ \end{rem}

 Since $\mathcal R_0$  has equal defect indices,  by Theorem \ref{srd} it has  self-adjoint extensions say $\mathcal T.$  Consider a relation ,\begin{align*}  \mathcal T^{\alpha,\beta}= \{ (f,g)\in \mathcal R : f_1(0)\sin\alpha +f_2(0)\cos\alpha =0,\,\,\,\,\\ f_1(N)\sin\beta +f_2(N)\cos\beta =0, \,\,\,\, \alpha, \beta \in (0,\pi]\} \end{align*} on a compact interval  $[0,N].$ 

  \begin{lemma}$ \mathcal T^{\alpha,\beta}$ is a self-adjoint relation.\end{lemma}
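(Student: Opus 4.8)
The plan is to work throughout in the Hilbert space $L^2(H,[0,N])$, letting $\mathcal R$ and $\mathcal R_0=\mathcal R^{*}$ now denote the maximal and minimal relations on the \emph{compact} interval $[0,N]$, and to use the compact-interval analogues of the $\R_+$-facts quoted above: from \cite{HW2}, $\mathcal R_0$ is closed and symmetric with $\mathcal R_0^{*}=\mathcal R$; Lemma \ref{src3} holds with $\lim_{x\to\infty}$ replaced by evaluation at $N$, namely
\begin{equation}\label{lagr}
\ip{f}{k}-\ip{g}{h}=h(0)^{*}Jf(0)-h(N)^{*}Jf(N)\qquad\text{for all }(f,g),(h,k)\in\mathcal R ;
\end{equation}
and, by Lemma \ref{src2}, boundary evaluation is unrestricted, so for any $c,d\in\C^{2}$ there is $(f,g)\in\mathcal R$ with $f(0)=c$, $f(N)=d$, and $f$ may moreover be taken supported in a neighbourhood of either prescribed endpoint. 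Since on a compact interval every solution of (\ref{ca}) lies in $L^2(H,[0,N])$, Remark \ref{ds} gives defect indices $(2,2)$, i.e. $\beta(\mathcal R_0)=2$. As a first step I would restate the boundary conditions: with $n_\alpha=(\sin\alpha,\cos\alpha)^{\top}$ and $n_\beta=(\sin\beta,\cos\beta)^{\top}$, membership in $\mathcal T^{\alpha,\beta}$ amounts to $n_\alpha^{*}f(0)=0$ and $n_\beta^{*}f(N)=0$; since $J$ is invertible and skew-symmetric (so $v^{*}Jv=0$ for real $v$, and in particular $n_\alpha^{*}Jn_\alpha=0$) these are equivalent to $f(0)\in\mathrm{span}_{\C}\{Jn_\alpha\}$ and $f(N)\in\mathrm{span}_{\C}\{Jn_\beta\}$.

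Next I would verify that $\mathcal T^{\alpha,\beta}$ is symmetric. For $(f,g),(h,k)\in\mathcal T^{\alpha,\beta}$ write $f(0)=c\,Jn_\alpha$ and $h(0)=d\,Jn_\alpha$ with $c,d\in\C$; then $h(0)^{*}Jf(0)=\bar d\,c\,(Jn_\alpha)^{*}J(Jn_\alpha)=0$ by skew-symmetry of $J$, and in the same way $h(N)^{*}Jf(N)=0$. Hence (\ref{lagr}) yields $\ip{f}{k}=\ip{g}{h}$, so $\mathcal T^{\alpha,\beta}\subset(\mathcal T^{\alpha,\beta})^{*}$.

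For the reverse inclusion I would take $(h,k)\in(\mathcal T^{\alpha,\beta})^{*}$; since $\mathcal R_0\subset\mathcal T^{\alpha,\beta}$, passing to adjoints gives $(h,k)\in\mathcal R_0^{*}=\mathcal R$, so $h$ is locally absolutely continuous with $Jh'=Hk$ and has well-defined boundary values. Then for every $(f,g)\in\mathcal T^{\alpha,\beta}$ the adjoint condition together with (\ref{lagr}) gives $h(0)^{*}Jf(0)-h(N)^{*}Jf(N)=0$. Plugging in the element of $\mathcal T^{\alpha,\beta}$ furnished by Lemma \ref{src2} with $f$ supported near $0$ and $f(0)=Jn_\alpha$ (admissible because $n_\alpha^{*}Jn_\alpha=0$, and with $f(N)=0$) forces $h(0)^{*}J(Jn_\alpha)=0$, i.e. $n_\alpha^{*}h(0)=0$ since $J^{2}=-I$; the symmetric choice, $f$ supported near $N$ with $f(N)=Jn_\beta$, forces $n_\beta^{*}h(N)=0$. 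Therefore $(h,k)\in\mathcal T^{\alpha,\beta}$, and combined with the previous step $\mathcal T^{\alpha,\beta}=(\mathcal T^{\alpha,\beta})^{*}$. (One may instead finish by a dimension count: the boundary evaluation $(f,g)\mapsto(f(0),f(N))$ maps $\mathcal R$ onto $\C^{2}\times\C^{2}$ with kernel $\mathcal R_0$ and $\mathcal T^{\alpha,\beta}$ onto the codimension-two subspace $\{(a,b):n_\alpha^{*}a=0=n_\beta^{*}b\}$, so $\mathcal T^{\alpha,\beta}$ is a $2$-dimensional symmetric extension of $\mathcal R_0$; since $\beta(\mathcal R_0)=2$, Theorem \ref{srd}$(2)$ gives self-adjointness.)

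I do not expect a genuine obstacle here; the work is bookkeeping. The two points needing care are, first, transferring the $\R_+$ statements of \cite{HW2} — the Lagrange identity in its boundary-term form, surjectivity of boundary evaluation with control of the support, and $\mathcal R_0^{*}=\mathcal R$ — to the compact interval, and second, the elementary matrix facts $v^{*}Jv=0$ for real $v$ and $J^{2}=-I$, which are exactly what make the separated conditions annihilate the boundary form in (\ref{lagr}) and pair an admissible $h$ to itself. The only real pitfall is the customary subtlety about boundary values of the equivalence classes where $H$ degenerates near an endpoint, which I would handle by adopting the conventions of \cite{HW2} under which $f(0)$ and $f(N)$ are well defined on the representatives in use.
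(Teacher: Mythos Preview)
Your proposal is correct. Your primary route differs from the paper's: you verify $\mathcal T^{\alpha,\beta}=(\mathcal T^{\alpha,\beta})^{*}$ directly, first checking symmetry via the Lagrange identity and then establishing the reverse inclusion by testing an arbitrary $(h,k)\in(\mathcal T^{\alpha,\beta})^{*}\subset\mathcal R$ against compactly supported elements with prescribed boundary values $Jn_\alpha$ or $Jn_\beta$. The paper instead argues by dimension count: after noting symmetry, it uses Lemma~\ref{src2} to exhibit two linearly independent elements $\phi_0,\phi_N$ (with $\phi_0(0+)=(-\cos\alpha,\sin\alpha)^{\top}$, $\phi_N(N-)=(-\cos\beta,\sin\beta)^{\top}$) lying in $D(\mathcal T^{\alpha,\beta})\setminus D(\mathcal R_0)$, so that $\mathcal T^{\alpha,\beta}$ is a $2$-dimensional extension of $\mathcal R_0$, and then invokes Theorem~\ref{srd}(2). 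You in fact sketch exactly this alternative in your parenthetical remark. Your direct argument is more self-contained and avoids appealing to the abstract extension theorem, at the cost of a little more boundary-form bookkeeping; the paper's version is shorter once Theorem~\ref{srd} is in hand.
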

  \begin{proof}

  Clearly $\mathcal T^{\alpha,\beta}$  is a symmetric relation because of the boundary conditions at $0$ and $N$. We will show that $ \mathcal T^{\alpha,\beta}$ is an $2$-dimensional extension of $ \mathcal R_0.$ Then by Theorem \ref{srd}, $ \mathcal T^{\alpha,\beta}$ is an self-adjoint relation.   By Lemma \ref{src2}, for $  c = \begin{pmatrix}-\cos\alpha\\ \sin\alpha \end{pmatrix}$ and $ w = \begin{pmatrix} -\cos\beta\\ \sin\beta \end{pmatrix} \in \C^2$ there exists $ \phi_0$ and $\phi_N $ in $ D(\mathcal R)$ such that $ \phi_0(0+)= c$ and $\phi_N(N-)= w $ and the support of $ \phi_0$ and $ \phi_N$ are contained in $ [0,N].$ Clearly $ \phi_0$ , $ \phi_N$ are  linearly independent  but $ \phi_0$ , $ \phi_N$ are not in $D(\mathcal R_0).$ This shows that $ D(\mathcal R_0)\subset D(R_0)+ L(\phi_0,\phi_N)\subset D(\mathcal T^{\alpha,\beta}).$  Because of the boundary condition at $0$ and $N$, $D(\mathcal T^{\alpha,\beta})$ is $2$-dimensional restriction of $D(\mathcal R).$ Hence $D(\mathcal T^{\alpha,\beta})= D(\mathcal R_0)+ L(\phi_0,\phi_N).$ Therefore, $\mathcal T^{\alpha,\beta} $ is $2$-dimensional extension of  $\mathcal R_0$ so that $\mathcal T^{\alpha,\beta} $ is a self-adjoint relation. \end{proof}

  Let $u(x,z)$ and $v(x,z)$ be the solution of the canonical system (\ref{ca}) on $ [0,N]$ with the initial values \[  u(0,z)=\begin{pmatrix}1\\0\end{pmatrix}, v(0,z)=\begin{pmatrix}0\\ 1\end{pmatrix} .\]  For $z\in C^+$ there is a unique $m(z)$ such that $ f(x,z)= u(x,z)+ m(z)v(x,z)$ satisfying \[f_{1}(N,z) \sin \beta + f_{2}(N,z) \cos\beta = 0 .\] This is well defined because $u $ does not satisfy the boundary condition at $N$ otherwise $ z$ will be an eigenvalue of some self-adjoint relation $  \mathcal T^{\alpha,\beta}.$ 

  Next, we describe the spectrum of $\mathcal T^{\alpha,\beta} .$ Let \[ T(x,z)=\begin{pmatrix} u_{ 1}(x,z)& v_{1}(x,z) \\                                                                                                                      u_{ 2}(x,z)& v_{ 2}(x,z) \\  \end{pmatrix},\,\,\,\, T(0,z) = \begin{pmatrix} 1 & 0 \\  0 & 1   \end{pmatrix}\]

and define \[ w_{\alpha}(x,z)= \frac{1}{\sin\alpha+ m(z)\cos\alpha } T(x,z)\begin{pmatrix}
                                                                                          \cos\alpha \\
                                                                                          -\sin\alpha
                                                                                        \end{pmatrix}
 \] It is not hard to see that

\begin{lemma} \label{ir}Using the notation above we have \[ f(x,z)w_{\alpha}(x,\bar{z})^*- w_{\alpha}(x,z)f(x,\bar{z})^* = T(x,z)JT(x,\bar{z})^* = J.\] \end{lemma}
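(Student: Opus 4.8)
The plan is to push everything through the fundamental matrix $T$, which splits the lemma into two independent identities: a purely algebraic one for the coefficient vectors of $f$ and $w_{\alpha}$, and a conserved-quantity ("generalized Wronskian") identity for $T$ itself.

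First I would record two facts about $T$. Since the columns $u(\cdot,z),v(\cdot,z)$ of $T(\cdot,z)$ solve (\ref{ca}) and $T(0,z)=I$, the matrix $T(\cdot,z)$ satisfies $JT'(x,z)=zH(x)T(x,z)$, equivalently $T'(x,z)=-zJH(x)T(x,z)$ (since $J^{-1}=-J$). Second, because $H(x)$ is real-valued, the functions $\overline{u(x,\bar z)}$ and $\overline{v(x,\bar z)}$ solve (\ref{ca}) for the parameter $z$ with the same initial data as $u(x,z),v(x,z)$, so $\overline{T(x,\bar z)}=T(x,z)$; inserting this into the boundary condition at $N$ that determines $m$ yields $\overline{m(\bar z)}=m(z)$ (which is also what makes $f(\cdot,\bar z)$ and $w_{\alpha}(\cdot,\bar z)$ meaningful for $z\in\C^{+}$ and connects them to $m(z)$).

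Next, writing $a(z)=\binom{1}{m(z)}$ and $b_{\alpha}(z)=\frac{1}{\sin\alpha+m(z)\cos\alpha}\binom{\cos\alpha}{-\sin\alpha}$, one has $f(x,z)=T(x,z)a(z)$ and $w_{\alpha}(x,z)=T(x,z)b_{\alpha}(z)$, hence
\[ f(x,z)w_{\alpha}(x,\bar z)^{*}-w_{\alpha}(x,z)f(x,\bar z)^{*}=T(x,z)\big[a(z)b_{\alpha}(\bar z)^{*}-b_{\alpha}(z)a(\bar z)^{*}\big]T(x,\bar z)^{*}. \]
Using $\overline{m(\bar z)}=m(z)$ and abbreviating $s=\sin\alpha$, $c=\cos\alpha$, $m=m(z)$, the bracket equals $\frac{1}{s+mc}\big[\binom{1}{m}(c,-s)-\binom{c}{-s}(1,m)\big]$, and a one-line expansion of the two outer products shows this is $J$ (the diagonal terms cancel, the off-diagonal ones are $\pm(s+mc)$). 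Thus it remains only to prove $T(x,z)JT(x,\bar z)^{*}=J$, and then both asserted equalities follow from the displayed identity.

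For that remaining step I would set $W(x)=T(x,z)JT(x,\bar z)^{*}$, differentiate, and substitute $T'(x,z)=-zJH(x)T(x,z)$ and $(T(x,\bar z)^{*})'=zT(x,\bar z)^{*}H(x)J$ to obtain the linear matrix ODE $W'=z\big(WHJ-JHW\big)$ with $W(0)=T(0,z)JT(0,\bar z)^{*}=J$; the constant matrix $J$ solves this ODE (because $JHJ-JHJ=0$), so $W\equiv J$ by uniqueness of solutions of a linear ODE. (Alternatively, one checks that $T(x,\bar z)^{*}JT(x,z)$ has vanishing derivative, hence equals $J$, and then deduces $T(x,z)JT(x,\bar z)^{*}=J$ using that $T$ is invertible.) None of this is delicate; the one thing that genuinely needs care is the reflection symmetry $\overline{m(\bar z)}=m(z)$ — without it the algebraic identity $a(z)b_{\alpha}(\bar z)^{*}-b_{\alpha}(z)a(\bar z)^{*}=J$ is false — so I would be careful to state exactly how $m$ is extended off $\C^{+}$ and to derive that symmetry from the reality of $H$.
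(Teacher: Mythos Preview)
Your argument is correct. The paper does not actually give a proof of this lemma: it merely prefaces the statement with ``It is not hard to see that'' and moves on. Your proposal supplies precisely the routine computation that is being suppressed, and the two-step split --- the algebraic identity $a(z)b_{\alpha}(\bar z)^{*}-b_{\alpha}(z)a(\bar z)^{*}=J$ for the coefficient vectors, followed by the constancy of the generalized Wronskian $T(x,z)JT(x,\bar z)^{*}$ --- is the natural way to carry it out. Your care about the reflection symmetry $\overline{m(\bar z)}=m(z)$ (which rests on $H$ being real, as is standard in this literature and in the paper's cited sources) is well placed, since without it the coefficient-vector identity would indeed fail.
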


 \begin{lemma} Let $z\in \Gamma(\mathcal T^{\alpha,\beta})$ then $ (\mathcal T^{\alpha,\beta}-z)^{-1}$ is a bounded linear operator and is defined by \[(\mathcal T^{\alpha,\beta}-z)^{-1}h(x) = \int_0^N G(x,t,z)H(t)h(t)dt,\] where $G(x,t,z)= \left \{ \begin{array}{cc}
f(x,z)w_{\alpha}(t,\bar{z_0})^*   & \text{  if }  0<t\leq x \\
w_{\alpha}(t,\bar{z})f(x,\bar{z_0})  & \text{  if }    x<t\leq N.
 \end{array} \right .$\end{lemma}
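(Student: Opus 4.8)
The plan is to verify by a direct computation that the integral operator on the right-hand side coincides with the resolvent, whose existence and boundedness are already available: since $\mathcal T^{\alpha,\beta}$ is self-adjoint, Theorem \ref{srp}(1) gives $\Gamma(\mathcal T^{\alpha,\beta})=\rho(\mathcal T^{\alpha,\beta})$, so for $z\in\Gamma(\mathcal T^{\alpha,\beta})$ the operator $(\mathcal T^{\alpha,\beta}-z)^{-1}$ is everywhere defined and bounded, and by the description of $\rho$ one has $\mathcal T^{\alpha,\beta}=\{(Tf,\,zTf-f):f\in L^2(H,[0,N])\}$ with $T=(\mathcal T^{\alpha,\beta}-z)^{-1}$. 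Hence it suffices to show that, writing $G_z$ for the integral operator $(G_zh)(x)=\int_0^N G(x,t,z)H(t)h(t)\,dt$ (reading the $z_0$ in the statement as $z$), the pair $(G_zh,\,zG_zh-h)$ lies in $\mathcal T^{\alpha,\beta}$ for every $h$; comparing with the graph of $T$ then forces $G_zh=Th$. To this end split the kernel at $t=x$ and write $u:=G_zh$ as
\[ u(x)=f(x,z)\int_0^x w_\alpha(t,\bar z)^*H(t)h(t)\,dt+w_\alpha(x,z)\int_x^N f(t,\bar z)^*H(t)h(t)\,dt. \]
Each summand is a product of one of the locally absolutely continuous solutions $f(\cdot,z),w_\alpha(\cdot,z)$ of (\ref{ca}) with an indefinite-integral (hence absolutely continuous) scalar function, so $u$ is locally absolutely continuous on $[0,N]$ and, being continuous on a compact interval, lies in $L^2(H,[0,N])$.

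The crucial step is differentiating this expression. The contributions produced by the variable endpoints combine into $\bigl(f(x,z)w_\alpha(x,\bar z)^*-w_\alpha(x,z)f(x,\bar z)^*\bigr)H(x)h(x)$, which by Lemma \ref{ir} equals $JH(x)h(x)$, while the remaining terms are $f'(x,z)\int_0^x(\cdots)+w_\alpha'(x,z)\int_x^N(\cdots)$. Applying $J$ and using $Jf'=zHf$, $Jw_\alpha'=zHw_\alpha$ together with $J^2=-I$, one gets $Ju'(x)=zH(x)u(x)-H(x)h(x)=H(x)\bigl(zu(x)-h(x)\bigr)$ a.e., so $(u,\,zu-h)\in\mathcal R$. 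It remains to check the two boundary conditions. At $x=0$ the first integral is empty, so $u(0)$ is a scalar multiple of $w_\alpha(0,z)=\bigl(\sin\alpha+m(z)\cos\alpha\bigr)^{-1}(\cos\alpha,-\sin\alpha)^{T}$, which satisfies $u_1(0)\sin\alpha+u_2(0)\cos\alpha=0$; at $x=N$ the second integral is empty, so $u(N)$ is a scalar multiple of $f(N,z)$, which by its defining property satisfies $u_1(N)\sin\beta+u_2(N)\cos\beta=0$. Therefore $(u,zu-h)\in\mathcal T^{\alpha,\beta}$, i.e. $(\mathcal T^{\alpha,\beta}-z)^{-1}h=u=G_zh$.

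I expect the only genuine obstacle to be the bookkeeping in the differentiation step — confirming that the endpoint terms cancel exactly through the Wronskian identity of Lemma \ref{ir} — together with checking that the ingredients are legitimate: $z$ must not be an eigenvalue of $\mathcal T^{\alpha,\beta}$, so that $f$ exists and the normalizing constant $\sin\alpha+m(z)\cos\alpha$ of $w_\alpha$ is nonzero, which is automatic once $z\in\Gamma(\mathcal T^{\alpha,\beta})=\rho(\mathcal T^{\alpha,\beta})$. After that the boundary-condition verification is immediate, and boundedness can alternatively be obtained directly, since on the compact square $[0,N]\times[0,N]$ the kernel $G(x,t,z)H(t)$ is bounded and $\int_0^N\operatorname{tr}H<\infty$, so $G_z$ is Hilbert--Schmidt.
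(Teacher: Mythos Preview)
Your argument is correct and shares the paper's core computation: differentiating the split integral and invoking Lemma~\ref{ir} to obtain $Ju'=zHu-Hh$. Where you diverge is in the concluding step. The paper, after showing that both $y=G_zh$ and $g=(\mathcal T^{\alpha,\beta}-z)^{-1}h$ solve the same inhomogeneous equation, performs an additional integration-by-parts computation of $\ip{f(\cdot,\bar z)}{h}$ to establish $y(0)=g(0)$ and then appeals to ODE uniqueness. You instead read off directly from the structure of the kernel that $u(0)$ is a multiple of $w_\alpha(0,z)$ and $u(N)$ is a multiple of $f(N,z)$, so both boundary conditions are satisfied, placing $(u,zu-h)$ in $\mathcal T^{\alpha,\beta}$; single-valuedness of the resolvent then forces $u=(\mathcal T^{\alpha,\beta}-z)^{-1}h$. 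Your route is shorter and avoids the paper's somewhat roundabout initial-value matching, at the cost of relying explicitly on $\Gamma(\mathcal T^{\alpha,\beta})=\rho(\mathcal T^{\alpha,\beta})$ up front; the paper's version, by contrast, makes the dependence on the boundary behaviour of $f$ at $N$ appear only inside the inner-product calculation.
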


  \begin{proof} Let $ y(x,z)= \int_0^N G(x,t,z)H(t)h(t)dt.$ We show that $y(x,z)$ solves the inhomogeneous equation \[ Jy'= zHy - Hh \] for $a.e. x>0.$
 Here \[y(x,z)= \int _0^x f(x,z)w_{\alpha}(t,\bar{z})^* H(t)h(t)dt + \int_x^N w_{\alpha}(x,z)f(t,\bar{z})^*H(t)h(t)dt \] and $ Jf'=zHf, Jw_{\alpha}'= zH w_{\alpha}.$ Then on differentiation we get,
  \begin{align*}y'(x,z) =  f(x,z)w_{\alpha}(x,\bar{z})^* H(x)h(x) + f'(x,z)\int _0^x w_{\alpha}(t,\bar{z})^* H(t)h(t)dt \\ - w_{\alpha}(x,z)f(x,\bar{z})^*H(x)h(x) + w_{\alpha}'(x,z)\int_x^N f(t,\bar{z})^*H(t)h(t)dt .\end{align*} Then \begin{align*}J y'(x,z) =  Jf(x,z)w_{\alpha}(x,\bar{z})^* H(x)h(x) + Jf'(x,z)\int _0^x w_{\alpha}(t,\bar{z})^* H(t)h(t)dt \\ - J w_{\alpha}(x,z)f(x,\bar{z})^*H(x)h(x) + Jw_{\alpha}'(x,z)\int_x^N f(t,\bar{z})^*H(t)h(t)dt \\ =  Jf(x,z)w_{\alpha}(x,\bar{z})^* H(x)h(x) + zH f(x,z)\int _0^x w_{\alpha}(t,\bar{z})^* H(t)h(t)dt \\ - Jw_{\alpha}(x,z)f(x,\bar{z})^*H(x)h(x) + zHw_{\alpha}(x,z)\int_x^N f(t,\bar{z})^*H(t)h(t)dt . \\ =  J \Big( f(x,z)w_{\alpha}(x,\bar{z})^* -w_{\alpha}(x,z)f(x,\bar{z})^* \Big) H h + \hspace{1.3in}\\  zH \Big( \int _0^x f(x,z) w_{\alpha}(t,\bar{z})^* H(t)h(t)dt  + \int_x^N w_{\alpha}(x,z) f(t,\bar{z})^*H(t)h(t)dt\Big) \\ = JJ Hh + zHy \hspace{3.1in}\\ = zHy-Hh. \hspace{3.28in}\end{align*}

  On the other hand denote $g(x,z)$ as \[g(x,z)= (\mathcal T^{ \alpha,\beta}-z)^{-1}h(x)\] then by Theorem \ref{sr1}, $h(x)=zu-v $ for some $ (u,v) \in \mathcal T^{\alpha,\beta} $  so that  $(g,zg-h) \in \mathcal T^{\alpha,\beta}. $ So $g(x,z)$ also satisfies the inhomogeneous problem and $ g(x,z)\in D(\mathcal T^{\alpha,\beta}),$ it satisfies the boundary condition which implies that $g(0,z)=\begin{pmatrix}
  \cos\alpha \\ -\sin\alpha\end{pmatrix}c(z)$ for some scalar $c(z).$
 We have \[y(0,z)= \frac{1}{\sin\alpha+  m(z) \cos\alpha}\begin{pmatrix}
  \cos\alpha \\ -\sin\alpha\end{pmatrix} \ip{f(x,\bar{z})}{h}\] Now \begin{align*}  \ip{ f(.,\bar{z})}{h} &= \ip{f(.,\bar{z})}{h} - \ip{f(.,\bar{z})}{zg} + \ip{f(.,\bar{z})}{zg}  \\ &= \ip{f(.,\bar{z})}{h -zg}+ z\ip{f(.,\bar{z})}{g}\\& =
  - \int_0^N f(x,\bar{z})^* H (zg-h)dx + z \int_0^N f(x,\bar{z})^* H gdx \\&= - \int_0^N f(x,\bar{z})^* Jg'dx -\int_0^N f'(x,\bar{z})^* J gdx \\ & = f(0,\bar{z})^* Jg(0,z)- f(N,\bar{z})^* Jg(N,z). \end{align*} Since both $ f(x,z)$ and $g(x,z)$ satisfies the same  boundary condition at $N$ $f(N,\bar{z})^* Jg(N,z)=0.$ Now  \begin{align*} f(0,\bar{z})^* Jg(0,z) = (1, m(z)) \begin{pmatrix} 0 & -1 \\ 1 & 0  \end{pmatrix} \begin{pmatrix} \cos\alpha \\ -\sin\alpha  \end{pmatrix} c(z).\end{align*}So  \begin{align*} y(0,z) & = \frac{1}{\sin\alpha+  m(z) \cos\alpha}\begin{pmatrix}
  \cos\alpha \\ -\sin\alpha\end{pmatrix}(1, m(z)) \begin{pmatrix} 0 & -1 \\ 1 & 0  \end{pmatrix} \begin{pmatrix} \cos\alpha \\ -\sin\alpha  \end{pmatrix} c(z)  \\ & = \frac{1}{\sin\alpha+  m(z) \cos\alpha} \begin{pmatrix} \cos\alpha \\ -\sin\alpha  \end{pmatrix}( m(z), -1 ) \begin{pmatrix} \cos\alpha \\ -\sin\alpha  \end{pmatrix} c(z)\\ & = \begin{pmatrix} \cos\alpha \\ -\sin\alpha  \end{pmatrix} c(z)\\ & = g(0,z). \end{align*}
  By uniqueness we must have , $g(x,z)=y(x,z).$ Moreover, $ (\mathcal T^{\alpha,\beta}-z)^{-1}$ is a bounded linear operator.

  \end{proof}

   Now define a map $V: L^2(H,[0,N])\rightarrow L^2(I,[0,N])$ by \[ Vy= H^{\frac{1}{2}}(x)y(x).\] Here $H^{\frac{1}{2}}(x)$  is the unique positive semi-definite square root of $H(x).$  Then $V$ is an isometry and hence maps $L^2(H,[0,N])$ unitarily onto the range $R(V)\subset L^2(I,[0,N]).$ Define an integral operator $\mathcal L$ on $ L^2(I,[0,N])$ as \[ (\mathcal L f)(x)= \int _0^N L(x,t)f(t)dt, \,\,\,\,L(x,t)= H^{\frac{1}{2}}(x) G(x,t)H^{\frac{1}{2}}(t). \] The kernel $L$ is square integrable since
\begin{align*} \int_0^N\int_0^N \|L^*L\|dxdt \leq\int_0^N\int_0^N \|VG^*\|\|(VG)^*\|dxdt\\ \leq \int_0^N\int_0^N\|G^*\| \|G\|dxdt <\infty. \end{align*} So $\mathcal L$ is a Hilbert-Schmidt operator and thus compact. Since $L(x,t)= L^*(t,x), \mathcal L$ is also self-adjoint.

\begin{lemma} \cite{RC} \label{lem6.3}  Let $f\in L^2(I,[0,N]), \lambda \neq 0,  $ then the following statements are equivalent:
\begin{enumerate}
  \item $ \mathcal L f = \lambda^{-1}f. $
  \item $f \in R(V),$ and the unique $ y \in L^2(H,[0,N])$ with $ Vy=f $ solves $( \mathcal T^{\alpha,\beta}-z )^{-1}y = \lambda y .$
      \end{enumerate}
 \end{lemma}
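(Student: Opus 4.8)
The plan is to realise $\mathcal L$ and the resolvent $R_z:=(\mathcal T^{\alpha,\beta}-z)^{-1}$ as the same operator seen through the isometry $V$, and then to transport the eigenvalue equation from one side to the other. Two ingredients are needed: the intertwining identity $\mathcal L\,V=V\,R_z$ on $L^2(H,[0,N])$, and the range inclusion $R(\mathcal L)\subset R(V)$. Once both are available, the equivalence of (1) and (2) is a three line argument, since unitarily equivalent operators have corresponding eigenvectors.

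To obtain the intertwining identity I would simply insert the definitions: for $y\in L^2(H,[0,N])$ and almost every $x$,
\[ (\mathcal L\,Vy)(x)=\int_0^N H^{\frac12}(x)\,G(x,t)\,H^{\frac12}(t)\,\big(H^{\frac12}(t)y(t)\big)\,dt=H^{\frac12}(x)\int_0^N G(x,t)\,H(t)\,y(t)\,dt=(V\,R_zy)(x), \]
using $(Vy)(t)=H^{\frac12}(t)y(t)$, $H^{\frac12}(t)H^{\frac12}(t)=H(t)$, and the fact that the interchange of integration with the pointwise algebra is legitimate because the kernel $L$ was shown above to be square integrable. For the range inclusion, given $f\in L^2(I,[0,N])$ I would put $k(x):=\int_0^N G(x,t)\,H^{\frac12}(t)f(t)\,dt$, so that $(\mathcal L f)(x)=H^{\frac12}(x)\,k(x)$; then $\|k\|_{L^2(H,[0,N])}^2=\|\mathcal L f\|_{L^2(I,[0,N])}^2<\infty$, hence $k\in L^2(H,[0,N])$ and $\mathcal L f=Vk\in R(V)$.

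The equivalence then follows. If (2) holds, write $f=Vy$ with $R_zy=\lambda^{-1}y$; then $\mathcal L f=\mathcal L\,Vy=V\,R_zy=\lambda^{-1}Vy=\lambda^{-1}f$, which is (1). Conversely, if $\mathcal L f=\lambda^{-1}f$ with $\lambda\neq0$, then $f=\lambda\,\mathcal L f\in R(\mathcal L)\subset R(V)$ by the range inclusion, so $f=Vy$ for the unique $y\in L^2(H,[0,N])$, uniqueness holding because $V$, being an isometry, is injective; applying the intertwining identity gives $V\,R_zy=\mathcal L f=\lambda^{-1}f=\lambda^{-1}Vy=V(\lambda^{-1}y)$, and cancelling the injective $V$ yields $R_zy=\lambda^{-1}y$, which is (2). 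I do not anticipate a serious obstacle --- the whole argument lives in the two displayed manipulations, both of them short. The point most easily overlooked, and the reason the range inclusion cannot be skipped, is that statement (1) only presupposes $f\in L^2(I,[0,N])$, so the vector $y$ of statement (2) is not even meaningful until one knows $f\in R(V)$; relatedly, since $H$ is only locally integrable and $H^{\frac12}$ merely measurable, one must keep the formal computations with $G$ and $H^{\frac12}$ within the scope of the Hilbert--Schmidt bound on $L$ established above.
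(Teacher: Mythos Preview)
Your argument is essentially the paper's own: the paper too first establishes $R(\mathcal L)\subset R(V)$ by writing $(\mathcal Lg)(x)=H^{1/2}(x)w(x)$ with $w(x)=\int_0^N G(x,t)H^{1/2}(t)g(t)\,dt\in L^2(H,[0,N])$, and then passes between (1) and (2) via the pointwise identity $\mathcal L(Vy)=V(R_zy)$ that you package as an intertwining relation. One cosmetic remark: your computation, like the paper's, actually yields $R_zy=\lambda^{-1}y$ rather than the printed $R_zy=\lambda y$; since $\mathcal L\!\restriction_{R(V)}$ and $R_z$ are unitarily equivalent they must share eigenvalues, so the exponent in (2) is a typo in the statement and your reading is the correct one.
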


\begin{proof}  For all $g \in L^2(I,[0,N]) $ we have, \[ (\mathcal Lg)(x)= H^{\frac{1}{2}}(x)w(x) \text{  where } w(x)= \int_0^N G(x,t)H^{\frac{1}{2}}(t)g(t)dt,\] lies in $L^2(H,[0,N]).$ Then $R(\mathcal L)\subset R (V).$ Now if $(1)$ holds then $f= \lambda \mathcal L f \in R(V).$ So $f=V(y)$ for unique $ y \in L^2(H,[0,N])$ and \[f(x)= H^{\frac{1}{2}}(x)y(x) = \lambda \mathcal L y(x)= \lambda H^{\frac{1}{2}}(x) \int_0^N G(x,t) H(t)y(t)dt\] for $a.e. x\in [0,N].$ In other words, \[ H^{\frac{1}{2}}(x)\big(y(x)-\lambda \int_0^N G(x,t) H(t)y(t)dt \big)=0.\] Conversely if $(2)$ holds,\[ \lambda y=  \int_0^N G(x,t) H(t)y(t)dt\] then $  H^{\frac{1}{2}}(x)y= \frac{1}{\lambda} \int_0^N H^{\frac{1}{2}}(x)G(x,t)H(t)y(t)dt.$

\end{proof}

\begin{lemma} \label{lem6.4} Let $z\in \C$. For any $ \lambda \neq z $,  if $(f, \lambda f) \in \mathcal T^{\alpha, \beta}$ then  $f$ solves $( \mathcal T^{\alpha,\beta}-z)^{-1}y= \frac{1}{\lambda -z}y .$  Conversely, if  $y \in L^2(H,[0,N]) $ and $y$ solves $(T^{\alpha,\beta}-z)^{-1}y= \lambda y$ then $(y,(z+\frac{1}{\lambda})y) \in \mathcal T^{\alpha, \beta}$.  \end{lemma}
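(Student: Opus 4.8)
The plan is to argue entirely from the relation-theoretic definition of the inverse $(\mathcal T^{\alpha,\beta}-z)^{-1}$, with no analysis beyond the linear algebra of subspaces of $\mathcal H^2$. Recall that for any linear relation $\mathcal S$ and any $z\in\C$ one has $\mathcal S-z=\{(g,g'-zg):(g,g')\in\mathcal S\}$ and hence $(\mathcal S-z)^{-1}=\{(g'-zg,\,g):(g,g')\in\mathcal S\}$; consequently the phrase ``$y$ solves $(\mathcal T^{\alpha,\beta}-z)^{-1}w=\mu w$'' means precisely $(w,\mu w)\in(\mathcal T^{\alpha,\beta}-z)^{-1}$, which unravels to the existence of $(g,g')\in\mathcal T^{\alpha,\beta}$ with $g=\mu w$ and $g'-zg=w$. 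Both implications then come out by substituting into these descriptions and using that $\mathcal T^{\alpha,\beta}$, being a subspace of $\mathcal H^2$, is closed under multiplication of pairs by scalars.

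For the forward direction I would proceed as follows. Given $(f,\lambda f)\in\mathcal T^{\alpha,\beta}$, taking $g=f$, $g'=\lambda f$ in the description of $\mathcal T^{\alpha,\beta}-z$ gives $(f,(\lambda-z)f)\in\mathcal T^{\alpha,\beta}-z$, hence $\bigl((\lambda-z)f,\,f\bigr)\in(\mathcal T^{\alpha,\beta}-z)^{-1}$. Since $\lambda\neq z$, multiplying this pair by $(\lambda-z)^{-1}$ yields $\bigl(f,\tfrac{1}{\lambda-z}f\bigr)\in(\mathcal T^{\alpha,\beta}-z)^{-1}$, i.e.\ $f$ solves $(\mathcal T^{\alpha,\beta}-z)^{-1}y=\tfrac{1}{\lambda-z}y$. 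When $z\in\Gamma(\mathcal T^{\alpha,\beta})$, so that by the previous lemma $(\mathcal T^{\alpha,\beta}-z)^{-1}$ is the bounded operator with kernel $G$, this last step is simply linearity of that operator, and the conclusion reads $(\mathcal T^{\alpha,\beta}-z)^{-1}f=\tfrac{1}{\lambda-z}f$.

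For the converse, suppose $y\in L^2(H,[0,N])$ and $(y,\lambda y)\in(\mathcal T^{\alpha,\beta}-z)^{-1}$. Unwinding the definition, there is $(g,g')\in\mathcal T^{\alpha,\beta}$ with $g'-zg=y$ and $g=\lambda y$; substituting the second equation into the first gives $g'=(1+z\lambda)y$, so $\bigl(\lambda y,\,(1+z\lambda)y\bigr)\in\mathcal T^{\alpha,\beta}$. Since $\lambda\neq 0$ (implicit in the hypothesis, as $\tfrac1\lambda$ appears), multiplying by $\lambda^{-1}$ gives $\bigl(y,(z+\tfrac1\lambda)y\bigr)\in\mathcal T^{\alpha,\beta}$, which is the claim.

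I do not expect any genuine obstacle: the whole content is the bookkeeping of the inverse relation together with the careful exclusion of the degenerate values $\lambda=z$ (where dividing by $\lambda-z$ fails in the first part) and $\lambda=0$ (where $\tfrac1\lambda$ is meaningless in the second part). The one point worth flagging is interpretational rather than technical: for the displayed equalities in the statement to be read as honest operator identities rather than as membership in a relation, one should take $z\in\Gamma(\mathcal T^{\alpha,\beta})$ (equivalently, in the resolvent set), so that $(\mathcal T^{\alpha,\beta}-z)^{-1}$ is the single-valued bounded operator furnished by the preceding lemma; under that hypothesis the scalar-multiplication steps above are nothing but linearity of that operator, and the proof goes through verbatim.
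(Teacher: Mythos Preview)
Your proof is correct and follows essentially the same relation-theoretic bookkeeping as the paper: pass from $(f,\lambda f)\in\mathcal T^{\alpha,\beta}$ to $((\lambda-z)f,f)\in(\mathcal T^{\alpha,\beta}-z)^{-1}$ and rescale, and for the converse unwind $(y,\lambda y)\in(\mathcal T^{\alpha,\beta}-z)^{-1}$ to $(\lambda y,(1+z\lambda)y)\in\mathcal T^{\alpha,\beta}$ and rescale. Your added remarks on the implicit hypothesis $\lambda\neq 0$ and on the interpretation when $z\in\Gamma(\mathcal T^{\alpha,\beta})$ are useful clarifications not made explicit in the paper.
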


\begin{proof} Let $(f, \lambda f) \in \mathcal T^{\alpha, \beta}$ then $ (f, \lambda f -zf) \in (\mathcal T^{\alpha, \beta}-z) .$ It follows that   \[((\lambda  -z)f,f) \in (\mathcal T^{\alpha, \beta}-z)^{-1} \Rightarrow  \Big(f,\frac{1}{(\lambda  -z)} \Big)\in (\mathcal T^{\alpha, \beta}-z)^{-1}.\] This means that $f$ solves \[( \mathcal T^{\alpha,\beta}-z)^{-1}y= \frac{1}{\lambda -z}y .\] Conversely suppose $y \in L^2(H,[0,N]) $ and $y$ solves \[(\mathcal T^{\alpha,\beta}-z)^{-1}y= \lambda y.\] That is $ (y,\lambda y) \in (\mathcal T^{\alpha,\beta}-z)^{-1}$ so that $(\lambda y ,y ) \in (\mathcal T^{\alpha,\beta}-z) .$ So there is $ (f,g) \in \mathcal T^{\alpha,\beta}$ such that $ \lambda y =f$ and \[g-zf =y   \Rightarrow g= y+z \lambda y . \] Hence \[(\lambda y ,y+z \lambda y ) \in \mathcal T^{\alpha,\beta} \Rightarrow  \Big( y , (z+\frac{1}{ \lambda} y )\Big) \in \mathcal T^{\alpha,\beta} .\]\end{proof}

By Lemma \ref{lem6.3} we see that there is a one to one correspondence of eigenvalues (eigenfunctions) for the operator $\mathcal L$ and $ (\mathcal T^{\alpha,\beta}-z)^{-1}.$ As $\mathcal L$ is compact operator, it has only discrete spectrum consisting of only eigenvalues. Since $(\mathcal T^{\alpha,\beta}-z)^{-1}$ is unitarily equivalent with $ \mathcal L\downharpoonright_{ R(V)}$, that is \[ V^{-1}\mathcal L\downharpoonright_{R(V)}V =(\mathcal T^{\alpha,\beta}-z)^{-1},\] $(\mathcal T^{\alpha,\beta}-z)^{-1}$  has only discrete spectrum consisting of only eigenvalues. Then by Theorem \ref{srp}, $\mathcal T^{\alpha,\beta}$
  has only discrete spectrum. By
Lemma\ref {lem6.4}, the spectrum of $\mathcal T^{\alpha,\beta}$ consists only eigenvalues.  Hence we have \[ \sigma(\mathcal T^{\alpha,\beta}) = \{ z\in \C: u_{\alpha 1}(N,z)\sin\beta+ u_{\alpha 2}\cos\beta = 0\}.\]

We would like to extend this idea over the half line $\R_+.$ First  note that we are considering the limit circle case of the system (\ref{ca}). That implies  for any $z\in \C^+$ the defect indices of $\mathcal R_0 $  are $(2,2)$.  Suppose $ p \in D(\mathcal R) \smallsetminus D(\mathcal R_0)$ such that $\displaystyle \lim_{x\rightarrow \infty } p(x)^* J p(x)= 0 .$ Such function clearly exists.

  Consider the relation \begin{align*}\mathcal T^{\alpha,p}  = \{ (f,g) \in \mathcal R : f_1(0)\sin\alpha+ f_2(0,z)\cos\alpha = 0 \\ \text{  and  } \displaystyle \lim_{x\rightarrow \infty } f(x)^* J p(x)= 0 \}.\end{align*}

   \begin{lemma}$\mathcal T^{\alpha,p}$ defines a self-adjoint extension of $ \mathcal R_{0}.$ \end{lemma}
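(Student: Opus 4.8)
The plan is to mimic the argument already used for $\mathcal T^{\alpha,\beta}$ on the compact interval and for the earlier self-adjointness lemmas: show $\mathcal T^{\alpha,p}$ is symmetric, and then that it is a $2$-dimensional extension of $\mathcal R_0$, so that Theorem \ref{srd}(2) forces self-adjointness. For symmetry, take $(f,g),(h,k)\in\mathcal T^{\alpha,p}$ and apply Lemma \ref{src3} to both pairs; the boundary term $h(0+)Jf(0+)$ vanishes because both $f$ and $h$ satisfy the same boundary condition at $0$ (the vectors $(\sin\alpha,\cos\alpha)$-annihilated vectors form a one-dimensional space, so $f(0+)$ and $h(0+)$ are parallel, hence $h(0+)^*Jf(0+)=0$), and the limit term at $\infty$ must be handled by the condition $\lim_{x\to\infty}f(x)^*Jp(x)=0$ together with a bilinearity/polarization argument showing $\lim_{x\to\infty}h(x)^*Jf(x)=0$ whenever $f,h$ both lie in $D(\mathcal R)$ and are ``$J$-orthogonal to $p$ at infinity.'' That last point is where the genuine work lies, and it is the main obstacle: one must argue that, in the limit circle case, the sesquilinear form $[f,h]_\infty:=\lim_{x\to\infty}h(x)^*Jf(x)$ is well defined on $D(\mathcal R)$, descends to a symplectic form on the two-dimensional quotient $D(\mathcal R)/D(\mathcal R_0)$, and that the condition defining $\mathcal T^{\alpha,p}$ picks out the Lagrangian subspace $\{f : [f,p]_\infty = 0\}$; since $p\notin D(\mathcal R_0)$ and $[p,p]_\infty=0$, this subspace is exactly one-dimensional in the quotient at the $\infty$-end.

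Concretely I would proceed in the following order. First, record that $D(\mathcal R_0)\subset D(\mathcal T^{\alpha,p})$: any $(f,g)\in\mathcal R_0$ has $f(0+)=0$ and $\lim_{x\to\infty}f^*(x)Jh(x)=0$ for all $(h,k)\in\mathcal R$ by the displayed description of $\mathcal R_0$, and in particular for $h=p$, so the two defining conditions of $\mathcal T^{\alpha,p}$ hold. Second, produce two explicit elements of $\mathcal T^{\alpha,p}$ not in $D(\mathcal R_0)$: by Lemma \ref{src2} choose $(\phi_0,\psi_0)\in\mathcal R$ with $\phi_0$ compactly supported and $\phi_0(0+)=(\cos\alpha,-\sin\alpha)^\top$ — this meets the boundary condition at $0$ trivially and, being compactly supported, trivially meets the $\infty$-condition against $p$; and take $(p,\cdot)$ itself restricted so that $p$ also satisfies the boundary condition at $0$ (or, more cleanly, replace $p$ by $p$ minus a suitable multiple of a $D(\mathcal R_0)$-adjustment and an element realizing the correct value at $0$ via Lemma \ref{src2}). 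These two are linearly independent modulo $D(\mathcal R_0)$: $\phi_0$ is ``nontrivial at $0$ but trivial at $\infty$,'' while $p$ is ``trivial at $0$ but nontrivial at $\infty$.'' Third, argue the reverse inclusion on dimension: $D(\mathcal T^{\alpha,p})$ is cut out of $D(\mathcal R)$ by one scalar condition at $0$ and one scalar condition at $\infty$, so it is at most a $2$-dimensional extension of $D(\mathcal R_0)$; combined with the previous step, $D(\mathcal T^{\alpha,p}) = D(\mathcal R_0)\dotplus L(\phi_0,p)$ is exactly a $2$-dimensional extension.

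Fourth and last, invoke Theorem \ref{srd}(2): a symmetric extension of $\mathcal R_0$ that is a $\beta(\mathcal R_0,z)$-dimensional extension is self-adjoint, and here $\beta(\mathcal R_0)=2$ by Remark \ref{ds} in the limit circle case, matching the dimension count just obtained. Hence $\mathcal T^{\alpha,p}=(\mathcal T^{\alpha,p})^*$. I expect the symmetry step — verifying that the $\infty$-boundary condition against a single function $p$ genuinely annihilates the limiting bilinear form $[f,h]_\infty$ for all admissible $f,h$ — to be the delicate point; it rests on the fact that in the limit circle case every solution is in $L^2(H,\R_+)$, so $h(x)^*Jf(x)$ converges as $x\to\infty$ for all $f,h\in D(\mathcal R)$ (this is Lemma \ref{src3}), and that the resulting form on the $2$-dimensional quotient at $\infty$ is nondegenerate and alternating, so its isotropic hyperplanes are exactly $1$-dimensional and of the form $[\,\cdot\,,p]_\infty=0$. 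Everything else is the same bookkeeping already carried out for $\mathcal T^{\alpha,\beta}$.
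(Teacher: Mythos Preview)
Your proposal is correct and follows essentially the same route as the paper: exhibit two elements of $D(\mathcal T^{\alpha,p})\setminus D(\mathcal R_0)$ (the paper truncates a solution $u$ near $\infty$ and truncates $p$ near $0$, playing the same roles as your $\phi_0$ and adjusted $p$), count that the two defining conditions make $\mathcal T^{\alpha,p}$ exactly a $2$-dimensional extension of $\mathcal R_0$, and invoke Theorem~\ref{srd}(2). Your treatment is in fact more careful than the paper's, which omits the symmetry verification you correctly flag as the delicate step; your symplectic/Lagrangian argument for $[\,\cdot\,,\,\cdot\,]_\infty$ is the right way to fill that gap.
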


   \begin{proof} Let $u(x)$ be a solution of the system (\ref{ca}) with some initial values and $p(x)$ as above. Define $u_0(x)=0 $ near the neighborhood of $\infty$ ie $\displaystyle \lim_{x\rightarrow\infty}u_0(x)=0$ and $u_0(x)=u(x)$ otherwise. Similarly, $p_0(x)=0$ in the neighborhood of $0$ and $p_0(x)=p(x)$ otherwise. Then clearly $u_0,p_0 \notin D(\mathcal R_{0}) $ and are linearly independent. Clearly $ D(\mathcal R_{0})+L(u_0,p_0)\subset D(\mathcal T^{\alpha,p}) . $ Since  $D(\mathcal T^{\alpha,p})$ is  at least $2$-dimensional restriction of $D(\mathcal R)$,  \[D(\mathcal T^{\alpha,p})= D(\mathcal R_{0})+L(u_0,p_0).\] Hence $\mathcal T^{\alpha,p}$ is a self-adjoint relation.  \end{proof}

      We next discuss the spectrum of $\mathcal T^{\alpha,p}.$
     Let $u(x,z)$ and $v(x,z)$ be two linearly independent solutions of the system (\ref{ca}) with\[  u(0,z)=\begin{pmatrix}1\\0\end{pmatrix}, v(0,z)=\begin{pmatrix}0\\ 1\end{pmatrix} .\]
      Let $z \in \C^+$ and as above write $f(x,z)= u(x,z)+m(z)v(x,z) \in L^2(H,\R_+)$ satisfying $\displaystyle \lim_{x\rightarrow \infty}f(x,z)^*JP(x)=0.$  Let $T(x,z)= \begin{pmatrix}u_{ 1} & v_{1}\\u_{ 2} & v_{ 2} \end{pmatrix}$ and \[w_{\alpha}(x,z)= \frac{1}{   \sin\alpha + m(z)\cos\alpha } T(x,z)\begin{pmatrix}\cos\alpha\\ -\sin\alpha\end{pmatrix}.\] Then as  in Lemma \ref{ir} we have, \[f(x,z)w_{\alpha}(x,\bar{z})^*-w_{\alpha}(x,z)f(x,\bar{z})^* = T(x,z)JT (x,\bar{z})^*= J.\]

      \begin{lemma} Let $z\in \rho(\mathcal T^{\alpha,p})$ then the resolvent operator $ (\mathcal T^{\alpha,p}-z)^{-1}$ is given by \[(\mathcal T^{\alpha,p}-z)^{-1}h(x)= \int _0^{\infty}G(x,t,z)H(t)h(t)dt \]  where  $ G(x,t,z)= \left \{ \begin{array}{cc}
f(x,z)w_{\alpha}(t,\bar{z})^*   & \text{  if }  0<t\leq x \\
w_{\alpha}(t,\bar{z})f(x,\bar{z})  & \text{  if }    x<t\leq \infty \end{array} \right.$ \end{lemma}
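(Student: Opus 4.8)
The plan is to run, on $\R_+$, the same argument that established the resolvent formula on the compact interval $[0,N]$; the only genuinely new point is that the limit circle hypothesis must be used to control the kernel at infinity. Unpacking the piecewise Green function, the candidate is
\[
y(x,z)=\int_0^{\infty}G(x,t,z)H(t)h(t)\,dt=A(x)\,f(x,z)+B(x)\,w_{\alpha}(x,z),
\]
with scalars $A(x)=\int_0^{x}w_{\alpha}(t,\bar z)^*H(t)h(t)\,dt$ and $B(x)=\int_x^{\infty}f(t,\bar z)^*H(t)h(t)\,dt$, so that $A(0)=0$, $A(\infty)=\ip{w_{\alpha}(\cdot,\bar z)}{h}$, $B(0)=\ip{f(\cdot,\bar z)}{h}$ and $B(\infty)=0$. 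Since we are in the limit circle case, all solutions of (\ref{ca}) lie in $L^2(H,\R_+)$; in particular $f(\cdot,z)$, $f(\cdot,\bar z)$, $w_{\alpha}(\cdot,z)$ and $w_{\alpha}(\cdot,\bar z)$ do. By Cauchy--Schwarz $A$ and $B$ are then bounded on $\R_+$, so $y\in L^2(H,\R_+)$ and $zy-h\in L^2(H,\R_+)$.

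First I would check that $y$ solves the inhomogeneous system $Jy'=zHy-Hh$ a.e. Differentiating $y=Af+Bw_{\alpha}$, inserting $A'=w_{\alpha}(\cdot,\bar z)^*Hh$, $B'=-f(\cdot,\bar z)^*Hh$, $Jf'=zHf$ and $Jw_{\alpha}'=zHw_{\alpha}$, and then using the identity $f(x,z)w_{\alpha}(x,\bar z)^*-w_{\alpha}(x,z)f(x,\bar z)^*=J$ recorded just before the lemma (the half-line analogue of Lemma \ref{ir}), one obtains, since $J^2=-I$,
\[
Jy'(x,z)=zHy(x,z)+J\big(f(x,z)w_{\alpha}(x,\bar z)^*-w_{\alpha}(x,z)f(x,\bar z)^*\big)Hh(x)=zHy(x,z)-Hh(x).
\]
This is the computation carried out on $[0,N]$ verbatim, and together with $y,\,zy-h\in L^2(H,\R_+)$ it gives $(y,zy-h)\in\mathcal R$.

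Next I would verify that $y\in D(\mathcal T^{\alpha,p})$. As $T(0,z)=I$ and $A(0)=0$,
\[
y(0,z)=B(0)\,w_{\alpha}(0,z)=\frac{\ip{f(\cdot,\bar z)}{h}}{\sin\alpha+m(z)\cos\alpha}\begin{pmatrix}\cos\alpha\\-\sin\alpha\end{pmatrix},
\]
so $y_1(0)\sin\alpha+y_2(0)\cos\alpha=0$. For the condition at infinity, expand
\[
y(x,z)^*Jp(x)=\overline{A(x)}\,f(x,z)^*Jp(x)+\overline{B(x)}\,w_{\alpha}(x,z)^*Jp(x).
\]
As $x\to\infty$ the first term tends to $0$ because $A(x)\to\ip{w_{\alpha}(\cdot,\bar z)}{h}$ is finite while $f(x,z)^*Jp(x)\to0$ by the choice of $f$; the second term tends to $0$ because $B(x)\to0$ while $w_{\alpha}(x,z)^*Jp(x)$ converges to a finite limit by Lemma \ref{src3} applied to $(w_{\alpha}(\cdot,z),zw_{\alpha}(\cdot,z))\in\mathcal R$ (a membership that again requires the limit circle case, so that $w_{\alpha}(\cdot,z)\in L^2(H,\R_+)$) and to $(p,p_1)\in\mathcal R$, where $Jp'=Hp_1$. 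Hence $\lim_{x\to\infty}y(x,z)^*Jp(x)=0$, and therefore $(y,zy-h)\in\mathcal T^{\alpha,p}$.

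Finally, because $z\in\rho(\mathcal T^{\alpha,p})$, by the definition of the resolvent set $(\mathcal T^{\alpha,p}-z)^{-1}$ is a well-defined bounded operator on $L^2(H,\R_+)$ (in particular $z$ is not an eigenvalue of $\mathcal T^{\alpha,p}$); the pair $(y,zy-h)\in\mathcal T^{\alpha,p}$ produced above shows precisely that it sends $h$ to $y=\int_0^{\infty}G(x,t,z)H(t)h(t)\,dt$, which is the asserted formula. I expect the main obstacle to be the third paragraph, namely the behaviour at infinity: obtaining $y\in L^2(H,\R_+)$ and, above all, $y(x,z)^*Jp(x)\to0$ as $x\to\infty$. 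Both rest squarely on the limit circle hypothesis (so that every solution of (\ref{ca}), and in particular $w_{\alpha}$, is square integrable) and on the boundary bilinear form at infinity furnished by Lemma \ref{src3}; everything else is the compact-interval lemma repeated without change.
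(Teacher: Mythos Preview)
Your argument is correct and runs parallel to the paper's, but the concluding step is organized differently. Both you and the paper verify that $y(x,z)=\int_0^\infty G(x,t,z)H(t)h(t)\,dt$ solves $Jy'=zHy-Hh$ by the same differentiation and use of the Wronskian-type identity $f w_\alpha^*-w_\alpha f^*=J$. Where you diverge is in the identification of $y$ with the resolvent. The paper sets $g=(\mathcal T^{\alpha,p}-z)^{-1}h$, notes that $g$ solves the same inhomogeneous equation, and then shows $y(0,z)=g(0,z)$ by an integration-by-parts computation of $\ip{f(\cdot,\bar z)}{h}$ (using $\lim_{x\to\infty}f^*Jg=0$); ODE uniqueness then gives $y=g$. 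You instead verify directly that $y\in D(\mathcal T^{\alpha,p})$: the condition at $0$ is immediate from $A(0)=0$, and for the condition at infinity you split $y^*Jp=\overline{A}\,f^*Jp+\overline{B}\,w_\alpha^*Jp$ and use $f^*Jp\to 0$, $B\to 0$, and the existence of $\lim w_\alpha^*Jp$ from Lemma~\ref{src3}. This is a clean alternative that dispenses with introducing $g$ and with the inner-product computation; the price is that you must invoke the limit circle hypothesis and Lemma~\ref{src3} explicitly to control the boundary term, whereas the paper hides this behind the already-known membership $g\in D(\mathcal T^{\alpha,p})$ and ODE uniqueness.
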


 \begin{proof}Let $y(x,z)= \int_0^{\infty}G(x,t,z)H(t)h(t)dt$ then $y$ solves the inhomogeneous equation \[ Jy'= zHy-Hh .\] This clear by differentiating \[y(x,z)= \int _0^x f(x,z)w_{\alpha}(t,\bar{z})^* H(t)h(t)dt + \int_x^{\infty} w_{\alpha}(x,z)f(t,\bar{z})^*H(t)h(t)dt. \] On the other hand denote $g(x,z)$ as $g(x,z)= (\mathcal T^{ \alpha,p}-z)^{-1}h(x)$ then by Theorem \ref{sr1}, $h(x)=zu-v $ for some $ (u,v) \in \mathcal T^{\alpha,p} $  so that  $(g,zg-h) \in \mathcal T^{\alpha,p}$ and hence $g$ satisfies the inhomogeneous equation. Since $g\in D(\mathcal T^{\alpha,p})$ \begin{align*}g_1(0,z)\sin\alpha + g_2(0,z)\cos\alpha =0,\displaystyle \lim_{x\rightarrow \infty} g^*(x,z)Jp(x,z)= 0.\end{align*} We also have $ \displaystyle \lim_{x\rightarrow \infty}f^*(x,z)Jg(x,z)=0 $  and  $ g(0,z)= \begin{pmatrix}\cos\alpha\\ -\sin\alpha\end{pmatrix}c(z)$ for some
scalar $ c(z).$ But also we have \[y(0,z) = \frac{1}{ (m(z) \cos \alpha +\sin\alpha)} \begin{pmatrix}\cos\alpha\\ \sin\alpha\end{pmatrix} \ip{f(\bar{z})}{h}.\] Here \begin{align*}  \ip{f(\bar{z})}{h} &=  \ip{f(\bar{z})}{h}- \ip{f(\bar{z})}{zg} - \ip{\bar{z}f(\bar{z})}{g}\\ &= \ip{f(\bar{z})}{h+zg}-\ip{\bar{z}f(\bar{z})}{g} \\ &= f ^*(0,\bar{z})J g(0,z)- \displaystyle \lim_{x\rightarrow \infty}f^*(x,z)Jg(x,z)\\ &= f ^*(0,\bar{z})J g(0,z).\end{align*} Hence $y(0,z)= g(0,z).$ By uniqueness we have $ y(x,z)= g(x,z).$ \end{proof}

Now define a map $V: L^2(H, \R_+)\rightarrow L^2(I, \R_+)$ by $ Vy= H^{\frac{1}{2}}(x)y(x). $  $V$ is isometry and maps unitarily onto the range $R(V)\subset L^2(I, \R_+) .$

 Define an integral operator $\mathcal L$ on $L^2(I, \R_+)$ by \[ (\mathcal L g)(x) = \int _0^{\infty} L(x,t)g(t)dt,\,\,\, L(x,t)= H^{\frac{1}{2}}(x)G(x,t,z)H^{\frac{1}{2}}(t).\]
 Then as before the kernel  $L$ is square integrable. This means that \[ \int _0^{\infty}\int _0^{\infty}\parallel L^*L\parallel < \infty .\] Hence $\mathcal L$ is a Hilbert Schmidt  a operator and so is  a compact operator.
The following two lemmas are extended from the bounded interval $[0,N]$ to $\R_+$ and the proofs are exactly the same as the proofs of Lemma \ref{lem6.3} and Lemma \ref{lem6.4}.

\begin{lemma}\label{lem6.5} \cite{RC}   Let $f\in L^2(I, R_+), \lambda \neq 0,  $ then the following statements are equivalent:
\begin{enumerate}
  \item $ \mathcal L f = \lambda^{-1}f. $
  \item $f \in R(V),$ and the unique $ y \in L^2(H,R_+)$ with $ Vy=f $ solves $( \mathcal T^{\alpha,p}-z )^{-1}y = \lambda y .$
      \end{enumerate}
 \end{lemma}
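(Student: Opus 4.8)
The plan is to copy, essentially verbatim, the argument used for Lemma~\ref{lem6.3}, since all the structural ingredients are already available on $\R_+$: the kernel $G(x,t,z)$ is the kernel of the bounded operator $(\mathcal T^{\alpha,p}-z)^{-1}$ by the previous lemma, the operator $\mathcal L$ is Hilbert--Schmidt (hence bounded) on $L^2(I,\R_+)$ because its kernel $L(x,t)=H^{1/2}(x)G(x,t,z)H^{1/2}(t)$ is square integrable, and $V$ is an isometry of $L^2(H,\R_+)$ onto $R(V)\subset L^2(I,\R_+)$, so in particular $V$ is injective. Nothing in the bounded-interval proof used compactness of $[0,N]$ beyond the convergence of the defining integrals, which is now supplied by the Hilbert--Schmidt bound.

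First I would record that $R(\mathcal L)\subset R(V)$. For $g\in L^2(I,\R_+)$ write $(\mathcal L g)(x)=H^{1/2}(x)w(x)$ where $w(x)=\int_0^{\infty}G(x,t,z)H^{1/2}(t)g(t)\,dt$; the square integrability of $L$ together with Cauchy--Schwarz shows $w\in L^2(H,\R_+)$, so $\mathcal L g=Vw\in R(V)$. Then for the implication $(1)\Rightarrow(2)$: if $\mathcal L f=\lambda^{-1}f$ then $f=\lambda\,\mathcal L f\in R(V)$, so $f=Vy$ for a unique $y\in L^2(H,\R_+)$, and substituting $f=H^{1/2}y$ into $f=\lambda\,\mathcal L y$ gives, for a.e. $x$,
\[
H^{1/2}(x)\Big(y(x)-\lambda\int_0^{\infty}G(x,t,z)H(t)y(t)\,dt\Big)=0,
\]
i.e. $V\big(y-\lambda(\mathcal T^{\alpha,p}-z)^{-1}y\big)=0$; injectivity of $V$ yields $(\mathcal T^{\alpha,p}-z)^{-1}y=\lambda y$. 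For $(2)\Rightarrow(1)$ one simply runs this backwards: from $\lambda y=\int_0^{\infty}G(x,t,z)H(t)y(t)\,dt$, apply $H^{1/2}(x)$ to both sides to get $H^{1/2}(x)y=\tfrac1\lambda\int_0^{\infty}H^{1/2}(x)G(x,t,z)H(t)y(t)\,dt$, that is $\mathcal L(Vy)=\lambda^{-1}(Vy)$, and set $f=Vy$.

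The only point that genuinely requires attention when passing from $[0,N]$ to $\R_+$ is the legitimacy of the improper integrals and of the interchange of integration order implicit in writing $\mathcal L(Vy)$ and in the manipulations above; I expect this to be the main (though minor) obstacle. It is settled by the square integrability of the kernel $L(x,t)=H^{1/2}(x)G(x,t,z)H^{1/2}(t)$ established just before the statement, which by Fubini--Tonelli and Cauchy--Schwarz makes all the relevant double integrals absolutely convergent; no new analytic phenomenon arises because $(\mathcal T^{\alpha,p}-z)^{-1}$ is already known to be a bounded operator represented by the kernel $G$.
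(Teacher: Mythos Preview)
Your proposal is correct and follows exactly the route the paper takes: the paper does not give a separate proof for Lemma~\ref{lem6.5} but simply declares that the argument is ``exactly the same'' as that of Lemma~\ref{lem6.3}, which is precisely what you reproduce (with the additional, harmless remark that the improper integrals are controlled by the square integrability of the kernel $L$). Your explicit use of the injectivity of $V$ to pass from $H^{1/2}(x)\big(y-\lambda(\mathcal T^{\alpha,p}-z)^{-1}y\big)=0$ to the conclusion is a welcome clarification of a step the paper leaves implicit.
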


\begin{lemma}\label{lem6.6} Let $z\in \C$. For any $ \lambda \neq z $,  if $(y, \lambda y) \in \mathcal T^{\alpha, p}$ then  $y$ solves $( \mathcal T^{\alpha,p}-z)^{-1}y= \frac{1}{\lambda -z}y .$  Conversely, if  $y \in L^2(H, R_+) $ and $y$ solves $(T^{\alpha,p}-z)^{-1}y= \lambda y$ then $(y,(z+\frac{1}{\lambda})y) \in \mathcal T^{\alpha,p}$.  \end{lemma}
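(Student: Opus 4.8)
The plan is to copy the proof of Lemma \ref{lem6.4} almost verbatim: nothing here distinguishes the half line $\R_+$ from the compact interval $[0,N]$, since the argument is pure linear-relation algebra. The only ingredients are that $\mathcal T^{\alpha,p}$ is a linear relation, i.e.\ a subspace of $\big(L^2(H,\R_+)\big)^2$, and hence so are the shifted relation $\mathcal T^{\alpha,p}-z=\{(f,g-zf):(f,g)\in\mathcal T^{\alpha,p}\}$ and its inverse $(\mathcal T^{\alpha,p}-z)^{-1}=\{(g-zf,f):(f,g)\in\mathcal T^{\alpha,p}\}$; in particular all three are closed under scalar multiplication.

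For the forward implication I would argue as follows. If $(y,\lambda y)\in\mathcal T^{\alpha,p}$ with $\lambda\neq z$, then $(y,(\lambda-z)y)\in\mathcal T^{\alpha,p}-z$, so passing to the inverse gives $((\lambda-z)y,y)\in(\mathcal T^{\alpha,p}-z)^{-1}$; multiplying this pair by the scalar $(\lambda-z)^{-1}$ --- legitimate since $\lambda\neq z$ and the relation is a subspace --- yields $\big(y,\frac{1}{\lambda-z}y\big)\in(\mathcal T^{\alpha,p}-z)^{-1}$. Reading $(\mathcal T^{\alpha,p}-z)^{-1}$ as the bounded operator it is for $z\in\rho(\mathcal T^{\alpha,p})$, this says exactly that $y$ solves $(\mathcal T^{\alpha,p}-z)^{-1}y=\frac{1}{\lambda-z}y$.

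For the converse, if $y\in L^2(H,\R_+)$ solves $(\mathcal T^{\alpha,p}-z)^{-1}y=\lambda y$, i.e.\ $(y,\lambda y)\in(\mathcal T^{\alpha,p}-z)^{-1}$, then taking the inverse gives $(\lambda y,y)\in\mathcal T^{\alpha,p}-z$, so by definition of the shift there is $(f,g)\in\mathcal T^{\alpha,p}$ with $f=\lambda y$ and $g-zf=y$, whence $g=y+z\lambda y=(1+z\lambda)y$. Thus $(\lambda y,(1+z\lambda)y)\in\mathcal T^{\alpha,p}$, and multiplying by $\lambda^{-1}$ gives $\big(y,(z+\frac{1}{\lambda})y\big)\in\mathcal T^{\alpha,p}$, as claimed.

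I do not expect any genuine obstacle: the computation is formal and word-for-word the same as in the $[0,N]$ case. The only points I would be careful to state are that one tacitly assumes $\lambda\neq 0$ so that $z+\frac1\lambda$ makes sense; that the scalar rescalings are justified precisely because $\mathcal T^{\alpha,p}$, $\mathcal T^{\alpha,p}-z$ and $(\mathcal T^{\alpha,p}-z)^{-1}$ are subspaces of $\big(L^2(H,\R_+)\big)^2$; and that everything specific to the half line --- the decay of $p$ at $\infty$ and the boundary condition at $0$ --- has already been built into the definition of $\mathcal T^{\alpha,p}$ and into the resolvent formula of the previous lemma, and so plays no further role here.
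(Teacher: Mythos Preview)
Your proposal is correct and follows exactly the paper's approach: the paper explicitly states that the proof of this lemma is ``exactly the same'' as that of Lemma~\ref{lem6.4}, and your argument reproduces that relation-algebraic computation verbatim with the appropriate notational changes. Your additional remarks about $\lambda\neq 0$ and the subspace property justifying scalar rescaling are sound clarifications of points the paper leaves implicit.
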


 Again by Lemma \ref{lem6.5},  we have a one to one correspondence of eigenvalues (eigenfunctions) for the operator $\mathcal L$ and $ (\mathcal T^{\alpha,p}-z)^{-1}.$ As $\mathcal L$ is compact operator, it has only discrete spectrum consisting of only eigenvalues and possibly zero.  Since $(\mathcal T^{\alpha,p}-z)^{-1}$ is unitarily equivalent with $ \mathcal L\downharpoonright_{ R(V)}$, that is $ V^{-1}\mathcal L\downharpoonright_{R(V)}V =(\mathcal T^{\alpha,p}-z)^{-1}, (\mathcal T^{\alpha,p}-z)^{-1} $ has only discrete spectrum consisting of only eigenvalues. Then by Theorem \ref{srp}, $\mathcal T^{\alpha,p}$
  has only discrete spectrum. By Lemma \ref{lem6.6} the spectrum of $\mathcal T^{\alpha,p}$ consists of only eigenvalues.

With these theory in hand, we are now ready to prove the main theorems

\begin{proof}[Proof of theorem \ref{lc}]

 Since $ \mathcal R_0$ is a symmetric relation,  the defect index $\beta(\mathcal R_0,z)$ is constant on upper and lower half planes.  In the limit-circle case, if $z$ is in upper or lower half-planes, $ \beta(\mathcal R_0,z) = 2$. Suppose  $\beta(\mathcal R_0,\lambda) < 2 $ for some $ \lambda \in \R.$ Since $\Gamma(\mathcal R_0)$ is open, $\lambda \notin \Gamma(\mathcal R_0)$ and hence $ \lambda \in S(\mathcal R_0).$  Since for each $ \alpha \in (0,\pi]$,  $\mathcal T^{\alpha,p}$  is self-adjoint extension of $\mathcal R_0$,  $ \lambda \in S(\mathcal T^{\alpha,p}) =  \sigma(\mathcal T^{\alpha,p})$. In the limit-circle case, $\sigma(\mathcal T^{\alpha,p})$ consists of  only eigenvalues. Therefore, $\lambda$ is an eigenvalue for all  boundary conditions $\alpha$ at $0$. However, this is impossible unless $\beta(\mathcal R_0, \lambda) = 2 $. \end{proof}

\begin{proof}[Proof of theorem \ref{db} ] Suppose it prevails the limit-circle case.  By Theorem \ref{lc}, the defect index  $\beta(\mathcal R_0,z)=  \dim N(\mathcal R,\bar{z}) =2$ for all $z\in \C$. In other words, for any $z\in \C $, all  solutions of (\ref{ca}) are in $L^2(H,\R_+)$. In particular, the constant solutions $ u(x)= \begin{pmatrix} 1 \\ 0 \end{pmatrix}$  and $ v(x)= \begin{pmatrix}0 \\1 \end{pmatrix}$  of (\ref{ca}) when $z=0$, are in $L^2(H,\R_+)$. But this is not possible because $ \displaystyle  \int_0^{\infty}u(x)^* H(x)u(x)dx +  \int_0^{\infty}v(x)^* H(x)v(x)dx = \int_0^{\infty}\operatorname{tr} H(x)dx = \infty.$  \end{proof}

\vspace{0.5in}

\providecommand{\bysame}{\leavevmode\hbox to3em{\hrulefill}\thinspace}
\providecommand{\MR}{\relax\ifhmode\unskip\space\fi MR }
% \MRhref is called by the amsart/book/proc definition of \MR.
\providecommand{\MRhref}[2]{%
  \href{http://www.ams.org/mathscinet-getitem?mr=#1}{#2}
}
\providecommand{\href}[2]{#2}

\vspace{0.5 in}

%\section*{Mathematics Department, University of Oklahoma, Norman, OK, 73019} \begin{center} E-mail Address: kacharya@math.ou.edu \end{center}

%\section*{Mathematics Department, University of Oklahoma, Norman, OK, 73019}\begin{center} E-mail Address: cremling@math.ou.edu \end{center}

\end{document}